\newcommand{\colord}{\color{black}}
\newcommand{\colorp}{\color{black}}
\newcommand{\colora}{\color{black}}
\newtheorem{lemma}{Lemma}[section]
\newtheorem{proposition}{Proposition}[section]
\newtheorem{theorem}{Theorem}[section]
\newtheorem{remark}{Remark}[section]
\def\td{\mathrm{d}}
\def\F{\mathcal{F}}
\def\R{\mathbb{R}}
\newcommand{\EQU}[1]{\begin{equation}{#1}\end{equation}}
\newcommand{\EQQ}[1]{\begin{equation}{#1 \nonumber}\end{equation}}
\newcommand{\EQN}[1]{\begin{equation}\begin{split}{#1}\end{split}\end{equation}}
\newcommand{\EQNN}[1]{\begin{equation}\begin{split}{#1 \nonumber}\end{split}\end{equation}}
\newcommand{\DIV}[2]{\left\{\begin{array}{#1} #2 \end{array} \right.} 
\newcommand{\mbbn}{\mathbb{N}}
\newcommand{\PX}{\partial_x}
\newcommand{\PY}{\partial_y}
\newcommand{\PZ}{\partial_z}
\newcommand{\PT}{\partial_\theta}
\newcommand{\mfa}{\mathfrak{A}}
\newcommand{\mcy}{\mathcal{Y}}
\newcommand{\SUML}{\sum_{l=1}^{L_n-1}}
\newcommand{\CHEH}{\check{H}_{n,\delta}}
\newcommand{\TILH}{\tilde{H}_{n,\delta}}
\begin{document}

\title{Efficient drift parameter estimation for ergodic solutions of backward SDEs}
\author{Teppei Ogihara$^*$ and Mitja Stadje$^{**}$\\
$*$
\begin{small}
Graduate School of Information Science and Technology, University of Tokyo, Tokyo, Japan, ogihara@mist.i.u-tokyo.ac.jp
\end{small}\\
$**$
\begin{small}
Faculty of Mathematics and Economics, University of Ulm, Ulm, Germany, mitja.stadje@uni-ulm.de
\end{small}
}
\maketitle


\noindent
{\bf Abstract.}
We derive consistency and asymptotic normality results for quasi-maximum likelihood methods for drift parameters of ergodic stochastic processes observed in discrete time in an underlying continuous-time setting. The special feature of our analysis is that the stochastic integral part is unobserved and non-parametric. Additionally, the drift may depend on the (unknown and unobserved) stochastic integrand. Our results hold for ergodic semi-parametric diffusions and backward SDEs. Simulation studies confirm that the methods proposed yield good convergence results.\\

\noindent
{\bf Keywords.} 
asymptotic normality; backward SDEs; consistency; ergodic diffusion processes; maximum-likelihood-type estimation; unobserved volatility processes
\section{Introduction}

	The paper analyzes statistical inference for Markovian ergodic forward backward stochastic differential equations (BSDEs). Ergodic {\colora solutions of} backward SDEs may be seen as a generalization of an ergodic Markovian diffusion process with unknown but ergodic diffusion part. Specifically, consider a probability space $(\Omega,\mathcal{F}=(\mathcal{F}_t)_t,P)$ with filtration $\mathcal{F}$ being generated by a $d$-dimensional Brownian motion $W$. Let $Y$ be a $d$-dimensional Markov diffusion process depending on an unknown parameter $\theta\in \mathbb{R}^m$. $Y$ will in the sequel be also referred to as a data generating process. In the classical statistical inference problem for stochastic processes $Y$ satisfies a stochastic differential equation of the form
		\begin{align} 
		\label{basic}
dY_t=	\psi(t,Y_t,\theta) + \sigma(t,Y_t,\theta) dW_t
	\end{align}
	where $\psi$ and $\sigma$ are known functions and $Y_0$ is assumed to be known as well. A classical example for $Y$ is given by a Brownian motion with drift or, rather popular in finance, a geometric Brownian motion. Statistical inference results for (\ref{basic}) are analyzed through quasi-maximum likelihood methods in {\colord Yoshida (1992, 2011), Kessler (1997) and Uchida and Yoshida (2012). They have been extended to jump--diffusion processes by Shimizu and Yoshida (2006) and Ogihara and Yoshida (2011).} Now assume that the diffusion function $\sigma$ in \eqref{basic} is \emph{unknown} and that we only know that the integrand of the diffusion part is given by a positive definite $\mathbb{R}^{d\times d}$-valued ergodic predictable process, say {\colorp $V_tV_t^\intercal$} bounded away from zero. This leads to the stochastic differential equation 
	\begin{align*} 
dY_t=	\psi(t,Y_t,\theta) + {\colorp V_t dW_t}
\end{align*}
	where   $V$ may be identified with a triagonal ergodic stochastic process. Next, suppose that we additionally allow the integrand of the drift, $\psi,$ to possibly also depend on $V_t V^\intercal_t $ and furthermore on an observed additional Markov process $X$. Then we have that $Y$ satisfies
	\begin{align} 
	\label{ebsde}
dY_t=	\psi(t,X_t,Y_t,V_t V^\intercal_t ,\theta)dt+  V_t dW_t.
	\end{align} 
This equation is also called {\colora a} backward stochastic differential equation with solution $(Y,V)$ and driver function $\psi$. The goal of this paper to give consistency and asymptotic normality results to estimate $\theta$ in (\ref{ebsde}) with data generating processes $(Y,X)$ and discrete time observations. 

BSDEs have been introduced by Peng and Pardoux (1991) and have since been extended in many directions regarding assumptions on the driver function, connections to PDEs and Hamilton-Jacobi-Bellman equations, applications to stochastic optimal control theory, smoothness of $(Y,V)$, robustness, numerical approximations and invariance principles. Although originally developed for a finite maturity, in many situations the terminal time is either random or there is no natural terminal time at all and the decision maker faces instead an infinite time horizon. Usually in the theory of BSDEs existence and uniqueness of a solution can be guaranteed by Lipschitz conditions on the driver. Now for an infinite time horizon {\colora the BSDE} may be ill posed which has been addressed by Briand and Hu (1998) by imposing a monotonicity assumption on the driver. However, for our statistical analysis we will simply assume that the data generating process satisfies an equation of the form (\ref{ebsde}) and is ergodic.
{\colora In this case we refer to (\ref{ebsde}) also as an ergodic BSDE.} 

Ergodic backward SDEs for finite or infinite dimensional Brownian motion have for instance been considered in Buckdahn and Peng (1999), Fuhrmann, Hu and Tessitore (2009), Richou (2009), Debussche, Hu and Tessitore (2011), Hu and Wang (2018), Madec (2015), Hu et al. (2015), Liang and Zariphopoulou (2017), Chong et al. (2019), Hu and Lemonnier (2019), Hu, Liang and {\colora Tang} (2020) and Guatteri and Tessitore (2020).

For statistical inference on BSDEs there is in general not much literature available. For nonparametric estimation of linear drivers see Su and Lin (2009), Chen and Lin (2010) and Zhang (2013). Zhang and Lin (2014) propose two terminal dependent estimation methods for integral forms of backward SDEs. Song (2014) gives results under independence assumptions. These works consider BSDEs which are non-ergodic and therefore need additional assumptions. In this work we show asymptotic results instead for an infinite time horizon under ergodicity assumptions on $(Y,V,X)$.   
{\colord Even if limited to conventional SDEs, our results enables drift parameter estimaion with an unknown volatility process, unlike previous studies (see Example 1 in Section~\ref{se:examples}).}

The paper is structured as follows: In Section \ref{se:2} we describe the setting our assumptions and give the main results. Section \ref{se:examples} gives a number of applications and examples. Section \ref{se:simulation} contains numerical studies in the one- and multidimensional case. The proofs can be found in Section \ref{se:proofs}.

\section{Main results}\label{se:2}

Given a probability space $(\Omega,\F,P)$ with a right-continuous filtration
$\mathbf{F}=(\F_t)_{t\geq 0}$,
let $Y=(Y_t)_{t\geq 0}$ be a $d_Y$-dimensional $\mathbf{F}$-adapted process satisfying
\begin{equation*}
Y_t=Y_T-\int_t^T\psi(X_s,Y_s,V_sV_s^\intercal,\theta_0)ds-\int_t^TV_sdW_s, \quad 0\leq t\leq T<\infty,
\end{equation*}
where $W=(W_t)_{t\geq 0}$ is a $d_W$-dimensional standard $\mathbf{F}$-Wiener process ($d_W\geq d_Y$),
$\theta_0\in\Theta$ is an unknown parameter, $\Theta$ is a bounded open subset in $\R^d$, $\psi$ is an $\mathbb{R}^{d_Y}$-valued function, $X=(X_t)_{t\geq 0}$ is a $d_X$-dimensional {\colord continuous} $\mathbf{F}$-adapted process, 
$V=(V_t)_{t\geq 0}$ is a $d_Y\times d_W$ matrix-valued {\colord continuous} $\mathbf{F}$-adapted process.
{\colord The dimension $d_X$ of $X_t$ is possibly zero. In that case, we ignore $X_t$.}
We observe $\{(X_{kh_n},Y_{kh_n})\}_{k=0}^n$, and consider asymptotics: $h_n\to 0, nh_n\to \infty$ and $nh_n^2\to 0$ as $n\to\infty$.

We construct a maximum-likelihood-type estimator for the parameter $\theta_0$.
For this purpose, we construct a quasi-likelihood function $H_n(\theta)$.
{\colord Let $\Delta_l U=U_{t^{l+1}_0}-U_{t^l_0}$ for a stochastic process $(U_t)_{t\geq 0}$.}
Let $(c_n)_{n\in \mbbn}$ be a sequence of positive integers such that 

\EQU{\label{cn-cond}{\colorp  c_nn^{-\epsilon}\to \infty \quad {\rm and} \quad c_nh_nn^{\epsilon}\to 0,}}
for some $\epsilon>0$.
Let $L_n=[n/c_n]$, $t^l_m=(m+c_nl)h_n$, and let
\begin{equation*}
\hat{Z}_l=\frac{1}{c_nh_n}\sum_{m=1}^{c_n}(Y_{t^l_m}-Y_{t^l_{m-1}})(Y_{t^l_m}-Y_{t^l_{m-1}})^\intercal \quad (0\leq l\leq L_n-1),
\end{equation*}
where $\intercal$ denotes transpose.
We define a quasi-log-likelihood function by 
  \EQU{{\colord H_n(\theta)=-\frac{1}{2}\sum_{l=1}^{L_n-1}\bigg\{(\Delta_l Y-c_nh_n\hat{\psi}_l(\theta))^\intercal\frac{\hat{Z}_{l-1}^{-1}}{c_nh_n}
   (\Delta_l Y-c_nh_n\hat{\psi}_l(\theta))\bigg\}1_{\{\det \hat{Z}_{l-1}>0\}},}
  }
where $\bar{\Theta}$ is the closure of $\Theta$ and $\hat{\psi}_l(\theta)=\psi(X_{t^l_0},Y_{t^l_0},\hat{Z}_{l-1},\theta)$.
Let $\Delta_l U=U_{t^{l+1}_0}-U_{t^l_0}$ for a stochastic process $(U_t)_{t\geq 0}$.

Then we can construct a maximum-likelihood-type estimator $\hat{\theta}_n$ as a random variable which maximizes $H_n$; 
$\hat{\theta}_n\in{\rm argmax}_{\theta\in\bar{\Theta}} H_n(\theta)$.\\

Let $\mathfrak{P}$ be the space of $d_Y\times d_Y$ {\colorp symmetric,} {\colord positive} definite matrices. 
{\colord For a vector $v=(v_i)_{1\leq i\leq k}$ and a matrix $m=(m_{ij})_{\substack{1\leq i\leq k_1 \\ 1\leq j\leq k_2}}$, we denote
 \EQQ{\partial_v^l=(\frac{\partial^l}{\partial v_{i_1}\cdots \partial v_{i_l}})_{i_1,\cdots, i_l=1}^k 
   \quad {\rm and} \quad  
   \partial_m^l=(\frac{\partial^l}{\partial m_{i_1j_1}\cdots \partial m_{i_lj_l}})_{\substack{1\leq i_1,\cdots, i_l\leq k_1 \\ 1\leq j_1,\cdots, j_l\leq k_2}}.}}
We assume that $\Theta$ admits Sobolev's inequality, that is, for any $p>d$,
there exists a positive constant $C_p$ depending only $p$ and $\Theta$ such that 
\EQU{\label{sobolev} \sup_{x\in\Theta}|u(x)|\leq C\sum_{k=0,1}\bigg(\int_\Theta |\PX^ku(x)|^pdx\bigg)^{1/p}}
for any $u\in C^1(\Theta)$.
Sobelev's inequality is satisfied if $\Theta$ has a Lipschitz boundary (see Adams and Founier~(2003)).

{\colord Let $\bar{\mathfrak{P}}$ be the closure of $\mathfrak{P}$ in $\R^{d_Y}\otimes \R^{d_Y}$, and $\mathfrak{P}_\delta=\{z\in \mathfrak{P}|z-\delta I \in \mathfrak{P}\}$ for any $\delta>0$, where $I$ is the unit matrix.}
{\colorp For $p\geq 1$ and $r\geq 1$, we consider the following assumptions.}
\begin{description}
\item[{\colorp Assumption (A1-$p$).}] $\sup_{t\geq 0}\lVert (V_tV_t^\intercal)^{-1}\rVert<\infty$ almost surely and {\colorp there exists a positive constant $C$ such that
\begin{eqnarray}
E[|V_t-V_s|^{2p}]^{1/(2p)}+E[|X_t-X_s|^p]^{1/p}&\leq & C|t-s|^{1/2}, \nonumber \\
E\bigg[\bigg|\frac{E[V_t-V_s|\mathcal{F}_s]}{(t-s)}\bigg|^{2p}\bigg]&\leq &C, \nonumber \\
E[|X_s|^p]\vee E[|V_s|^{2p}]\vee E[|Y_s|^p]&\leq&C, \nonumber 
\end{eqnarray}
for $0\leq s<t$.}
\item[{\colorp Assumption (A2-$r$).}]
$\PT^l\psi(x,y,z,\theta)$ exists and is continuous on $\mathbb{R}^{d_X}\times \mathbb{R}^{d_Y}\times{\colord \bar{\mathfrak{P}}}\times \bar{\Theta},$
for $l\in \{0,1,2\}$, 
and there exists a {\colorp constant $C$} such that
\begin{equation*}
|\PT^l\psi(x,y,z,\theta)|\leq {\colorp C(1+|x|+|y|+|z|)^r}.
\end{equation*}
{\colorp Moreover, for any $\delta>0$, there exists a constant $C_\delta$ such that} 
\EQNN{
&|\PT^l\psi(x_1,y_1,z_1,\theta)-\PT^l\psi(x_2,y_2,z_2,\theta)| \\
&\quad \leq {\colorp C_\delta(1+|x_1|+|y_1|+|z_1|)^r}(|x_1-x_2|+|z_1-z_2|+|y_1-y_2|)}
for $l\in\{0,1,2\}$, $x,x_1,x_2\in \R^{d_X}$, $y,y_1,y_2\in\R^{d_Y}$, {\colord $z\in \bar{\mathfrak{P}}$, $z_1,z_2\in \mathfrak{P}_\delta$}, and $\theta\in\Theta$.
\item[{\colorp Assumption (A3-$p$).}]
At least one of the following two conditions holds true.
\begin{enumerate}
\item The function $\psi(x,y,z,\theta)$ does not depend on $y$ and $(X_t,V_tV_t^\intercal)$ is ergodic, that is, there exists an invariant distribution $\pi$ such that
for any measurable function $f$,
\begin{equation*}
\frac{1}{T}\int^T_0f(X_t,V_tV_t^\intercal)dt\overset{P}\to \int f(x,z)\pi(dxdz),
\end{equation*}
as $T\to\infty$. {\colorp Moreover, 
\begin{equation*}
\int \bigg(\frac{1+|x|+|z|}{(\det z) \wedge 1}\bigg)^p\pi(dxdz)<\infty.
\end{equation*}
}
\item $(X_t,Y_t,V_tV_t^\intercal)$ is ergodic, that is, there exists an invariant distribution $\pi$ such that
for any measurable function $f$,
\begin{equation*}
\frac{1}{T}\int^T_0f(X_t,Y_t,V_tV_t^\intercal)dt\overset{P}\to \int f(x,y,z)\pi(dxdydz),
\end{equation*}
as $T\to\infty$. {\colorp Moreover,  
\begin{equation*}
\int \bigg(\frac{1+|x|+|y|+|z|}{(\det z) \wedge 1}\bigg)^p\pi(dxdydz)<\infty.
\end{equation*}
}
\end{enumerate}
\item[Assumption (A4).] (Identifiability condition)
For $\theta_1,\theta_2\in \bar{\Theta}$, $\psi(x,y,z,\theta_1)=\psi(x,y,z,\theta_2)$ for all $(x,y,z)$ on ${\rm supp}(\pi)$ implies $\theta_1=\theta_2$.
\end{description}

{\colord Most of the above assumptions are {\colora standard} for asymptotic theory of maximum-likelihood-type estimation to ergodic diffusion processes, and similar (or stronger) assumptions are required in Kessler (1997) and Uchida and Yoshida (2012).
A similar statement applies to Condition (A2$'$-$r$) appearing later.
Here, the upper bound $C_\delta$ of $\PT^l \psi$ in (A2-$r$) depends on $\delta$. While this assumption is not {\colora a} typical one, 
by doing so, (A2-$r$) is satisfied even the case that $\psi$ is not smooth at $z=0$ (for example, $\psi(x,y,z,\theta)=\theta \sqrt{z}1_{\{z>0\}}$ with $d_Y=1$).
For sufficient conditions of ergodicity for $(X_t,Y_t,V_tV_t^\intercal)$,
we refer readers to Remark 1 of Uchida and Yoshida (2012).

{\colorp Fix $\epsilon>0$ satisfying (\ref{cn-cond}).}
Under the assumptions above, we obtain consistency of our estimator.
}

\begin{theorem}[consistency]\label{consistency-thm}
{\colorp Let $p,r\geq 1$ such that 
\EQU{\label{pr-cond} \frac{p}{4r}>d\vee \frac{2}{\epsilon} \vee 4.}
Assume (A1-$p$), (A2-$r$), (A3-$p$), and (A4).}
Then $\hat{\theta}_n\overset{P}\to \theta_0$ as $n\to\infty$.
\end{theorem}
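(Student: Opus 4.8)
The plan is to follow the standard scheme for consistency of quasi-maximum-likelihood-type estimators (as in Kessler (1997) and Uchida and Yoshida (2012)), adapted to the present latent-volatility situation. Define the normalized contrast $\mathbb{U}_n(\theta):=(nh_n)^{-1}(H_n(\theta)-H_n(\theta_0))$ and show that it converges, uniformly in $\theta\in\bar{\Theta}$ and in probability, to the deterministic limit
\begin{equation*}
\mathbb{Y}(\theta)=-\frac{1}{2}\int (\psi(x,y,z,\theta)-\psi(x,y,z,\theta_0))^\intercal z^{-1}(\psi(x,y,z,\theta)-\psi(x,y,z,\theta_0))\,\pi(dx\,dy\,dz),
\end{equation*}
with the obvious reduction to $\pi(dx\,dz)$ under the first alternative of (A3-$p$). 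Since $z^{-1}$ is positive definite on $\mathrm{supp}(\pi)$, we have $\mathbb{Y}(\theta)\leq 0$, with equality exactly when $\psi(\cdot,\theta)=\psi(\cdot,\theta_0)$ $\pi$-a.e.; by the identifiability condition (A4) this forces $\theta=\theta_0$, so $\mathbb{Y}$ has a strict, unique maximum at $\theta_0$. Given the uniform convergence together with this identification, the standard argmax argument yields $\hat{\theta}_n\overset{P}\to\theta_0$.

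The starting point is the algebraic decomposition obtained by expanding the quadratic form and cancelling the $\theta$-free term,
\begin{align*}
\mathbb{U}_n(\theta)&=\frac{1}{nh_n}\sum_{l=1}^{L_n-1}(\hat{\psi}_l(\theta)-\hat{\psi}_l(\theta_0))^\intercal \hat{Z}_{l-1}^{-1}\,\Delta_l Y\,1_{\{\det \hat{Z}_{l-1}>0\}} \\
&\quad -\frac{c_n}{2n}\sum_{l=1}^{L_n-1}\big(\hat{\psi}_l(\theta)^\intercal \hat{Z}_{l-1}^{-1}\hat{\psi}_l(\theta)-\hat{\psi}_l(\theta_0)^\intercal \hat{Z}_{l-1}^{-1}\hat{\psi}_l(\theta_0)\big)1_{\{\det \hat{Z}_{l-1}>0\}}.
\end{align*}
Writing $\Delta_l Y=c_nh_n\,\psi(X_{t^l_0},Y_{t^l_0},V_{t^l_0}V_{t^l_0}^\intercal,\theta_0)+\int_{t^l_0}^{t^{l+1}_0}V_s\,dW_s+(\text{error})$, the drift part combines with the cross term and, after the cancellation $\int\Delta\psi^\intercal z^{-1}\psi_0-\int\Delta\psi^\intercal z^{-1}\psi_0$, leaves exactly the block average $\tfrac1{L_n}\sum_l(\cdots)$ (note $c_n/n\approx 1/L_n$) whose ergodic limit is $\mathbb{Y}(\theta)$. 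The martingale part of $\Delta_l Y$ produces a remainder $\frac{1}{nh_n}\sum_l(\hat{\psi}_l(\theta)-\hat{\psi}_l(\theta_0))^\intercal \hat{Z}_{l-1}^{-1}\int_{t^l_0}^{t^{l+1}_0}V_s\,dW_s$; since $\hat{Z}_{l-1}$ and $\hat{\psi}_l(\theta)$ are $\mathcal{F}_{t^l_0}$-measurable, these are martingale differences, each block integral having conditional size $\sqrt{c_nh_n}$, so the $L^2$-norm of the remainder is of order $(nh_n)^{-1}\sqrt{L_nc_nh_n}=(nh_n)^{-1/2}\to 0$.

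Three replacements must then be justified: (i) $\Delta_l Y$ by its drift-plus-martingale decomposition, with the Riemann and It\^o errors controlled by the moment and H\"older bounds of (A1-$p$); (ii) the realized block covariance $\hat{Z}_{l-1}$ by the true $V_{t^{l-1}_0}V_{t^{l-1}_0}^\intercal$, and the latter by $V_{t^l_0}V_{t^l_0}^\intercal$ --- this is where \eqref{cn-cond} enters, since $\hat{Z}_{l-1}$ is the realized covariation of $c_n$ increments over a window of length $c_nh_n$, so $c_nn^{-\epsilon}\to\infty$ supplies enough averaging for concentration while $c_nh_nn^{\epsilon}\to 0$ keeps $V$ nearly constant over the window; (iii) the plug-in of $\hat{Z}_{l-1}$ into the $z$-slot of $\psi$, controlled by the locally-Lipschitz-in-$z$ bound of (A2-$r$) on $\mathfrak{P}_\delta$, together with bounds on $\hat{Z}_{l-1}^{-1}$ and on $1_{\{\det \hat{Z}_{l-1}>0\}}$ via (A1-$p$), which gives $\sup_t\lVert(V_tV_t^\intercal)^{-1}\rVert<\infty$, so the indicator equals $1$ with probability tending to one and the inverse stays bounded. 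After these replacements the $\theta$-dependent blocks become functions of $(X_{t^l_0},Y_{t^l_0},V_{t^l_0}V_{t^l_0}^\intercal)$, and the block average converges to the time average and hence, by the ergodic theorem in (A3-$p$), to the $\pi$-integral defining $\mathbb{Y}(\theta)$.

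To upgrade pointwise-in-$\theta$ convergence to uniform convergence over $\bar{\Theta}$, I would bound $L^q$-moments of $\sup_\theta|\mathbb{U}_n(\theta)-\mathbb{Y}(\theta)|$ through Sobolev's inequality \eqref{sobolev}, which requires $L^p$-control, for some $p>d$, of both $\mathbb{U}_n(\theta)-\mathbb{Y}(\theta)$ and its gradient $\PT(\mathbb{U}_n-\mathbb{Y})(\theta)$. Via (A2-$r$) these reduce to time- and $\pi$-averages of powers of $(1+|x|+|y|+|z|)^r$ weighted by powers of $\lVert(VV^\intercal)^{-1}\rVert$, which are finite by the integrability in (A3-$p$) and the moments of (A1-$p$) precisely under the exponent budget $p/(4r)>d\vee 2/\epsilon\vee 4$ of \eqref{pr-cond} (roughly, the $4r$ accounts for the two $\psi$-factors of growth $r$ and the matrix inverse in the quadratic form, the $2/\epsilon$ for the window errors in (ii), and $d$ for the Sobolev embedding). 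The main obstacle is step (ii): quantifying, with moments strong enough to survive both the Sobolev embedding in $\theta$ and the $L^q$-averaging over the $L_n$ blocks, the error from substituting the noisy latent estimator $\hat{Z}_{l-1}$, its inverse, and the near-singular event $\{\det\hat{Z}_{l-1}\le 0\}$ into the nonlinear $\psi$ and into the quadratic form; controlling these moments uniformly is exactly what dictates the form of \eqref{pr-cond}.
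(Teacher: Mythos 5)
Your outline coincides with the paper's own strategy: expand the quadratic form, replace $\Delta_l Y$ by drift plus martingale, replace $\hat{Z}_{l-1}$ by $Z_{t_0^{l-1}}$, control the plug-in error in the $z$-slot of $\psi$ by the local Lipschitz bound of (A2-$r$), pass to the ergodic limit via (A3-$p$), and conclude with (A4) and an argmax argument, using Sobolev's inequality (\ref{sobolev}) for uniformity in $\theta$. However, there is a genuine gap precisely at the step you yourself label ``the main obstacle,'' and it is not a technicality one can wave at: none of your moment estimates (for the martingale remainder, for the substitution of $\hat{Z}_{l-1}$ and its inverse, or for the Sobolev step) are actually licensed by the stated assumptions. (A1-$p$) only gives $\sup_t\lVert(V_tV_t^\intercal)^{-1}\rVert<\infty$ \emph{almost surely} --- a random bound with no integrability --- and the integrability in (A3-$p$) concerns the invariant measure $\pi$, i.e.\ the limit object, not the finite-$n$ sums; on the event $\{\det\hat{Z}_{l-1}>0\}$ the determinant can still be arbitrarily close to zero, so $E[\lVert\hat{Z}_{l-1}^{-1}\rVert^q]$ need not even be finite. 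The paper's resolution is the stopping time $T_{n,\delta}$ and the localized contrasts $\check{H}_{n,\delta}$, $\tilde{H}_{n,\delta}$: every single estimate carries the indicator $1_{\{t_0^l<T_{n,\delta}\}}$, on which all inverses are bounded by $\delta$-dependent constants, and the localization is removed only at the end via $\lim_{\delta\to 0}\liminf_n P(T_{n,\delta}=+\infty)=1$ (equation (\ref{Tn-lim})), which in turn rests on the concentration bound (\ref{hatZ-Z-diff-ineq}) --- this is where (\ref{cn-cond}) and the requirement $p/(2r)>1/\epsilon$ from (\ref{pr-cond}) actually enter. Without this device (or an equivalent one), your plan does not produce valid inequalities.

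A second, related gap is in your uniformity argument: you propose to show that $L^q$-moments of $\sup_\theta|\mathbb{U}_n(\theta)-\mathbb{Y}(\theta)|$ tend to zero via Sobolev's inequality, claiming the needed $L^p$-control follows ``by the integrability in (A3-$p$) and the moments of (A1-$p$).'' This cannot work as stated even after localizing, because (A3-$p$) gives convergence of the ergodic averages \emph{in probability only} --- no rate and no $L^p$-convergence is assumed --- so $E|\mathbb{U}_n(\theta)-\mathbb{Y}(\theta)|^p$ has no reason to vanish. The paper's proof is structured exactly to sidestep this: the approximation errors ($\Lambda_1,\dots,\Lambda_4$ in Lemma~\ref{checkH-tildeH-diff-lemma} and the Riemann-sum error in (\ref{tildeH-eq})) are shown to vanish in $L^{pr'/4}$ on the localized event, and Sobolev is applied to \emph{those}; the remaining ergodic term is then handled by pointwise-in-$\theta$ convergence in probability (\ref{Hn-lim-eq}) combined with C-tightness of $(nh_n)^{-1}(H_n(\cdot)-H_n(\theta_0))$ --- which requires only \emph{boundedness}, not vanishing, of moments of the $\theta$-derivatives --- and Billingsley's Theorem 7.3. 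Your route could in principle be rescued by a uniform-integrability upgrade of the ergodic convergence on the localized event, but that argument is absent, and as written the moment claims underpinning it are false.
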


~

Under {\colord (A2-$r$) and} (A3-$p$), we define
\begin{equation*}
\Gamma=\int \partial_\theta \psi(x,z,\theta_0)^\intercal z^{-1}\partial_\theta \psi(x,z,\theta_0)\pi(dxdz)
\end{equation*}
if the function $\psi(x,y,z,\theta)$ does not depend on $y$, and otherwise we define
\begin{equation*}
\Gamma=\int \partial_\theta \psi(x,y,z,\theta_0)^\intercal z^{-1}\partial_\theta \psi(x,y,z,\theta_0)\pi(dxdydz).
\end{equation*}

To deduce asymptotic normality of our estimator, we need a further condition.
{\colord Let $\mathcal{O}$ be an open set in $\R^{d_Y}\otimes \R^{d_Y}$ such that $\bar{\mathfrak{P}}\subset \mathcal{O}$.}
\begin{description}
\item[Assumption (A2$'$-$r$).]
{\colord (A2-$r$) is satisfied.}
$\PX^i\PY^j\PZ^k\PT^l\psi(x,y,z,\theta)$ exists and is continuous on $\mathbb{R}^{d_X}\times \mathbb{R}^{d_Y}\times{\colord \mathcal{O}}\times \bar{\Theta}$ for $l\in\{0,1,2,3\}$ and $i,j,k\in\{0,1,2\}$ with $i+j+k\leq 2$, 
and {\colord for any $\delta>0$,} there exists {\colorp a constant $C'_\delta$} such that
$$|\PX^i\PY^j\PZ^k\PT^l\psi(x,y,z,\theta)|\leq {\colorp C'_\delta(1+|x|+|y|+|z|)^r}$$
for {\colord $x\in\R^{d_X}$, $y\in\R^{d_Y}$, $z\in\mathfrak{P}_\delta$,} $l\in\{0,1,2,3\}$ and $i,j,k\in\{0,1,2\}$ with $i+j+k\leq 2$.

Moreover, there exist a Wiener process $(W'_t)_{t\geq 0}$ independent of $(W_t)_{t\geq 0}$ and $\mathbf{F}$-progressively measurable processes $(a_t^j)_{t\geq 0}$ for $j\in \{1,2,3\}$ such that
$$X_t=X_0+\int^t_0a_s^1ds+\int_0^ta_s^2dW_s+\int_0^ta_s^3dW'_s,$$
and $\sup_{t\geq 0}E[|a_t^j|^p]<\infty$ for any $p>0$ and $j\in\{1,2,3\}$.
\end{description}
\begin{discuss}
{\colorr The estimate for $\PX^i\PY^j\PZ^k\PT^l\psi$ is used in (\ref{psi-diff-eq}) and the estimates for $\PT^l\tilde{\psi}-\PT^l\acute{\psi}$ and $\PT \Lambda_1$ in Proposition~\ref{score-diff-prop}}
\end{discuss}

Suppose that $n^3h_n^5\to 0$. Then we can choose $c_n$ in the definition of $H_n$ satisfying 
\begin{equation}\label{cn-condition}
nh_n^2c_n\to 0 \quad {\rm and} \quad \sqrt{nh_n}/c_n\to 0.
\end{equation}
{\colorp For such $c_n$, fix $\epsilon>0$ satisfying (\ref{cn-cond}).}

\begin{theorem}[Asymptotic normality]\label{asymp-normal-thm}
{\colorp Let $p,r\geq 1$ such that (\ref{pr-cond}) is satisfied.}
Assume (A1-$p$), (A2$'$-$r$), (A3-$p$), (A4), and that $n^3h_n^5\to 0$ as $n\to\infty$. Assume further that $\Gamma$ is positive definite and $c_n$ satisfies (\ref{cn-condition}). Then
\begin{equation*}
\sqrt{nh_n}(\hat{\theta}_n-\theta_0)\overset{d}\to N(0,\Gamma^{-1}).
\end{equation*}
\end{theorem}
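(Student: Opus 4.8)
The plan is to follow the classical $Z$-estimator route: combine the first-order condition for the maximizer with a one-term Taylor expansion, and reduce the statement to a central limit theorem for the normalized score at $\theta_0$ together with a law of large numbers for the normalized observed information. Since Theorem~\ref{consistency-thm} gives $\hat\theta_n\overset{P}\to\theta_0$ and $\theta_0$ lies in the open set $\Theta$, with probability tending to one $\hat\theta_n$ is an interior maximizer, so $\PT H_n(\hat\theta_n)=0$. A Taylor expansion then yields
\begin{equation*}
\sqrt{nh_n}(\hat\theta_n-\theta_0)=-\Big(\tfrac1{nh_n}\int_0^1\PT^2H_n(\theta_0+u(\hat\theta_n-\theta_0))\,du\Big)^{-1}\tfrac1{\sqrt{nh_n}}\PT H_n(\theta_0),
\end{equation*}
so it suffices to prove (i) $\tfrac1{\sqrt{nh_n}}\PT H_n(\theta_0)\overset{d}\to N(0,\Gamma)$ and (ii) $-\tfrac1{nh_n}\PT^2H_n(\theta)\overset{P}\to\Gamma$ uniformly for $\theta$ in a shrinking neighborhood of $\theta_0$; the conclusion then follows from Slutsky's theorem and the positive definiteness of $\Gamma$.

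For the score, writing $\Sigma_s=V_sV_s^\intercal$ and $u_l(\theta)=\Delta_lY-c_nh_n\hat\psi_l(\theta)$, a direct differentiation gives $\PT H_n(\theta_0)=\sum_l(\PT\hat\psi_l(\theta_0))^\intercal\hat Z_{l-1}^{-1}u_l(\theta_0)1_{\{\det\hat Z_{l-1}>0\}}$, where the indicator equals one on an event of probability tending to one by (A1-$p$). Splitting $\Delta_lY$ into its drift and its stochastic-integral part, the leading contribution is $\xi_l=(\PT\hat\psi_l(\theta_0))^\intercal\hat Z_{l-1}^{-1}\int_{t_0^l}^{t_0^{l+1}}V_s\,dW_s$. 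Because $\hat Z_{l-1}$ and $\hat\psi_l$ are built from observations up to time $t_0^l$, they are $\F_{t_0^l}$-measurable, so $(\xi_l)$ is a martingale-difference array with respect to $(\F_{t_0^l})_l$. I would verify the two conditions of the martingale CLT: the predictable quadratic variation $\tfrac1{nh_n}\sum_lE[\xi_l\xi_l^\intercal\mid\F_{t_0^l}]$ equals $\tfrac{c_n}{n}\sum_l(\PT\hat\psi_l)^\intercal\hat Z_{l-1}^{-1}\big(\tfrac1{c_nh_n}E[\int_{t_0^l}^{t_0^{l+1}}\Sigma_s\,ds\mid\F_{t_0^l}]\big)\hat Z_{l-1}^{-1}\PT\hat\psi_l$ up to negligible terms, and using $\hat Z_{l-1}\to\Sigma$ on the block together with the ergodic averaging of (A3-$p$) this converges in probability to $\Gamma$. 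The Lyapunov condition $\sum_lE[|\xi_l|^4]\to0$ then follows from the moment bounds of (A1-$p$) and (A2$'$-$r$) under the integrability $\int((1+|x|+|y|+|z|)/((\det z)\wedge1))^p\,\pi<\infty$, the exponent condition (\ref{pr-cond}) furnishing the required integrability margin.

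For the observed information, differentiating once more gives $\PT^2H_n(\theta)=-c_nh_n\sum_l(\PT\hat\psi_l)^\intercal\hat Z_{l-1}^{-1}\PT\hat\psi_l+\sum_l(\PT^2\hat\psi_l)^\intercal\hat Z_{l-1}^{-1}u_l(\theta)$ (indicators suppressed). After dividing by $nh_n$, the first sum is $-\tfrac{c_n}{n}\sum_l(\PT\hat\psi_l)^\intercal\hat Z_{l-1}^{-1}\PT\hat\psi_l$, which by the same ergodic averaging and $\hat Z_{l-1}\to\Sigma$ converges to $-\Gamma$; the second sum is negligible because its martingale part has $L^2$-norm $O((nh_n)^{-1/2})$ after scaling and its drift part is controlled by the block-discretization estimates. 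To upgrade this to \emph{uniform} convergence over a neighborhood of $\theta_0$, which is needed because the Hessian is evaluated along the random segment to $\hat\theta_n$, I would use the Sobolev inequality (\ref{sobolev}) with uniform moment bounds on $\PT^3H_n$; this is exactly why (A2$'$-$r$) requires $\theta$-derivatives of $\psi$ up to order three. Plugging (i) and (ii) into the Taylor identity completes the argument.

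The main obstacle is the control of the plug-in volatility estimator $\hat Z_{l-1}$ and of the drift-discretization bias at the delicate $\sqrt{nh_n}$ scale. The error $\hat Z_{l-1}-\Sigma_{t_0^l}$ has a statistical part of order $c_n^{-1/2}$ (averaging $c_n$ squared increments), a part of order $(c_nh_n)^{1/2}$ from the time-variation of $V$ over a block via the H\"older estimate in (A1-$p$), and a contamination of order $h_n$ from the drift inside the squared increments; these must be shown not to bias the score once multiplied by $\PT\hat\psi_l$ and summed. Likewise the Euler approximation $c_nh_n\hat\psi_l$ of the drift integral leaves a remainder whose conditional mean must vanish faster than $\sqrt{nh_n}$. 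The two requirements translate precisely into the conditions (\ref{cn-condition}): $\sqrt{nh_n}/c_n\to0$ kills the volatility-plug-in variance at the score scale, while $nh_n^2c_n\to0$ kills the drift-discretization bias, and the two are jointly feasible exactly when $n^3h_n^5\to0$. Finally, the decomposition of $X$ as an It\^o process in (A2$'$-$r$) is used to expand $\psi(X_s,\cdot)-\psi(X_{t_0^l},\cdot)$ into a martingale increment plus a drift of the right order, which is what makes these remainder estimates go through.
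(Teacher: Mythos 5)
Your high-level skeleton is the same as the paper's: first-order condition plus Taylor expansion, a martingale CLT for the normalized score, uniform convergence of the normalized Hessian on a shrinking neighborhood via Sobolev's inequality, and Slutsky. The Hessian part and the CLT for the ``oracle'' leading term are essentially the paper's Lemma~\ref{checkH-tildeH-diff-lemma} and Lemma~\ref{tildeH-conv-lemma}. However, the crux of the theorem --- showing that the plug-in error $\hat{Z}_{l-1}^{-1}-Z_{t_0^{l-1}}^{-1}$ does not perturb the score at the $\sqrt{nh_n}$ scale --- is asserted rather than proved in your proposal, and the mechanism you invoke for it is wrong. Concretely, the first-order statistical error $\hat{Z}_{l-1}-Z_{t_0^{l-1}}=O_P(c_n^{-1/2})$ enters the score through sums like $\frac{c_nh_n}{\sqrt{nh_n}}\sum_{l}\PT\tilde{\psi}_l(\theta_0)^\intercal(\hat{Z}_{l-1}^{-1}-Z_{t_0^{l-1}}^{-1})\tilde{\psi}_l(\theta_0)$, and bounding each summand in absolute value yields a total of order $\sqrt{nh_n}\,c_n^{-1/2}$, which does \emph{not} tend to zero under (\ref{cn-condition}): for instance $h_n=n^{-0.7}$, $c_n=n^{0.25}$ satisfies all hypotheses of Theorem~\ref{asymp-normal-thm} ($nh_n^2c_n=n^{-0.15}$, $\sqrt{nh_n}/c_n=n^{-0.1}$, $n^3h_n^5=n^{-0.5}$), yet $\sqrt{nh_n}\,c_n^{-1/2}=n^{0.025}\to\infty$. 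The condition $\sqrt{nh_n}/c_n\to0$ kills only the \emph{second-order} error $O_P(c_n^{-1})$ arising from $Z^{-1}(Z-\hat{Z})\hat{Z}^{-1}(Z-\hat{Z})Z^{-1}$, not the first-order one. What actually saves the first-order term is not its size but its martingale structure: by (\ref{hatZ-eq1}) the leading error is $\frac{2}{c_nh_n}\sum_{m}\mfa_{l-1,m}$, a sum of stochastic integrals with vanishing conditional mean given $\F_{t_0^{l-1}}$, so after replacing the weights $\tilde{\psi}_l$ by the $\F_{t_0^{l-1}}$-measurable $\acute{\psi}_l$ one gets square-root cancellation \emph{across blocks}, giving $O_P(\sqrt{L_nc_n}\,h_n/\sqrt{nh_n})=O_P(\sqrt{h_n})$. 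This is exactly the content of Proposition~\ref{score-diff-prop} combined with Lemma~\ref{F-sum-lemma} and the expansion (\ref{invZ-diff-eq2}), and nothing in your proposal supplies this cancellation argument.

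A second, related gap is the absence of any truncation controlling degeneracy. You apply the martingale CLT to $\xi_l=(\PT\hat{\psi}_l(\theta_0))^\intercal\hat{Z}_{l-1}^{-1}\int_{t_0^l}^{t_0^{l+1}}V_s\,dW_s$, but the conditional variance and Lyapunov conditions require moments of $\hat{Z}_{l-1}^{-1}$, and neither (A1-$p$) (which only gives $\sup_t\lVert(V_tV_t^\intercal)^{-1}\rVert<\infty$ almost surely) nor the event $\{\det\hat{Z}_{l-1}>0\}$ of probability tending to one provides such bounds: $\hat{Z}_{l-1}$ can be arbitrarily close to singular with positive probability. The paper's stopping times $T_{n,\delta}$ and the approximations $\check{H}_{n,\delta}$, $\tilde{H}_{n,\delta}$ exist precisely to restrict everything to $\mathfrak{P}_\delta$, and Lemma~\ref{F-sum-lemma} has its specific form because the $\F_{t_0^l}$-measurable indicators $1_{\{t_0^l<T_{n,\delta}\}}$ destroy the martingale property, which must be repaired before taking $L^2$ bounds. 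Relatedly, the paper proves the CLT for the score built from the true $Z_{t_0^{l-1}}^{-1}$ (Lemma~\ref{tildeH-conv-lemma}) and transfers it to $H_n$ via Proposition~\ref{score-diff-prop}, rather than running the CLT with $\hat{Z}^{-1}$ inside the array as you propose. Without the martingale decomposition of $\hat{Z}_l-Z_{t_0^l}$ and the truncation machinery, your outline cannot be completed into a proof.
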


{\colord The condition $n^3h_n^5\to 0$ is stronger than the ones in previous works (for instance $nh_n^2\to 0$ in Yoshida (2011), and $nh_n^p\to 0$ for $p\geq 2$ in Uchida and Yoshida (2012) and Kessler (1997)).
Unlike previous studies, we need to construct an estimator $\hat{Z}_l$ of $Z_t$ whose structure is not specified.
For this purpose, (\ref{cn-condition}) and consequently $n^3h_n^5\to 0$ is required.

\begin{remark}
If $V_t$ is a diffusion process with {\colora SDE-coefficients not depending} on $\theta$, $\hat{\theta}_n$ is asymptotically efficient under the assumptions of Gobet (2002) because $\Gamma^{-1}$ corresponds the efficient asymptotic variance in Gobet (2002).
\end{remark}
}

\section{Examples}\label{se:examples}

\begin{enumerate}
\item The first example to which our results apply is a data generating process of the form $$X_0 = x_0,$$ 
$$\td X_t = \psi(t,X_t,\theta)\td t + V_t \td W_t,$$
where $V$ is an unknown predictable ergodic process.
We remark that previous literature only treated the case $dX_t = \psi(t,X_t,\theta)\td t + \sigma(t,X_t,\theta) \td W_t$ with $\psi$ and $\sigma$ known.
\item As a further example consider 
\begin{align}
dP_s &:=
\begin{pmatrix}
dP_s^1 \\ dP_s^2
\end{pmatrix}\nonumber \\
&=
\begin{pmatrix}
 (\mu P^1_s + \sqrt{Z_s^{1,1}} \sqrt{\nu_s} \theta^1)\td s +\sqrt{Z_s^{1,1}} \td W_s^{1} \\
 ( \mu P^2_s + \sqrt{Z_s^{2,1}} \sqrt{\nu_s} \theta^1 + \sqrt{Z_s^{2,2}} \sqrt{\nu_s} \theta^2) \td s +  \sqrt{Z_s^{2,1}} \td W_s^1 + \sqrt{Z_s^{2,2}} \td W_s^2  \label{eq.Heston}
\end{pmatrix} .
\end{align}
with $\mu\leq 0.$
This backward SDE is motivated by extending the evolution of a price process in the Heston model to a random and possibly arbitrary large time horizon.
\item Ergodic BSDEs appear naturally in forward performance processes which are utility functionals which do not depend on the specific time horizon, see for instance Hu, Liang and Tang (2020). In Liang and Zariphopoulou (2017) for instance a forward performance process is desribed which has the factor form $U(x,t)=\frac{x^\delta}{\delta}e^{Y_t-\lambda t}$ with $Y$ being the {\colora ergodic} solution of an BSDE with quadratic driver function.
\end{enumerate}

\section{Simulation studies}\label{se:simulation}

In the sequel, we will consider different possibilities for our sequences converging to zero or to infinity. In particular, consider
$c_n = n^{0.05k}, k = 1,2,\ldots,l-1.$

$h_n = n^{-0.05l}, l= 11,\ldots,19.$
Then we must have
\begin{enumerate}[label=\alph*)]
\item $n h_n^2 c_n = n^{1+0.05k-0.1l} \rightarrow 0$ \\
$\Rightarrow 0.05k < 0.1l-1 \Rightarrow k < 2l-20$
\item $\frac{\sqrt{n h_n}}{c_n} = n^{\frac{1}{2}-0.025l-0.05k} \rightarrow 0$ \\
$\Rightarrow \frac{1}{2}-0.025l < 0.05k \Rightarrow 10-\frac{l}{2} < k$
\item $n^3 h_n^5 \rightarrow 0$ \\
$\Rightarrow n^{3-0.25l} \rightarrow 0$
$\Rightarrow 3-0.25l < 0 \Rightarrow 12<l $
\end{enumerate} 

Combining three cases yields
$13 \leq l \leq 19, \max(1,10-\frac{l}{2}) \leq k \leq \min(19,2l-20) = 2l-20$. We will below try every one of these combinations.\\

\subsection{Simulation Results for the Vasicek model}

Suppose that $X_t$ evolves according to the Vasicek model, that is, $\td X_t=a(b-X_t)\td t + \sigma \td W_t$ where $W_t$ is the standard Brownian motion, with parameters $a=2, b=0.3$ and $\sigma=0.025$. The initial value $X_0$ is set as $0.3$. Let us estimate $\theta$ in the equation
\begin{equation}
\td Y_t = \theta \sqrt{|X_t|+0.1}\ \td t + \sqrt{|X_t|+0.1}\ \td W_t \ ,
\end{equation}
where $Y_0=1$. 

In the following $h_n$ is set to be $n^{-0.05l}$ and $c_n$ to be $n^{0.05k}$. We consider integers $l$ and $k$ where to satisfy the conditions of Theorem \ref{consistency-thm} and Theorem \ref{asymp-normal-thm} $13 \leq l \leq 19$ and $\max(1,10-\frac{l}{2}) \leq k \leq 2l-20$. To look for the pair of $(l,k)$ which best estimates $\theta$, we run simulations for each combination of $(l,k)$ and calculate the average of the errors as the sum of differences between $\hat{\theta}_n$ and $\theta$ in percentage for the $n$'s simulated, which means
$$ \text{Error} = \frac{\sum_{n \in \mathcal{A}} |\hat{\theta}_n - \theta| / \theta}{|\mathcal{A}|},$$
where $\mathcal{A}$ denotes the set of $n$'s simulated.
Two sets of $n$'s are considered: $\mathcal{A}_1 = \{1\times 10^5,2\times 10^5,\ldots,1\times 10^6 \}$ and $\mathcal{A}_2 = \{1\times 10^6,2\times 10^6,\ldots,1\times 10^7 \}$. We let $\theta = 1$.

The results are summarized in the following tables.

\begin{table}[!h]
\centering
\begin{tabular}{|c|c|ccccccc|}
\hline
\multicolumn{2}{|c|}{} & \multicolumn{7}{c|}{$l$} \\
\cline{3-9}
\multicolumn{2}{|c|}{} & 13 & 14 & 15 & 16 & 17 & 18 & 19 \\
\hline
\multirow{18}{*}{$k$}
& 1 &   &   &   &   &   & 4.79986  & 4.39994  \\
& 2 &   &   &   & 0.55168  & 0.59204  & 0.64261  & 0.63193  \\
& 3 &   & 0.13564  & 0.19179  & 0.17408  & 0.43068  & 0.45217  & 0.82545  \\
& 4 & 0.065  & 0.16896  & 0.0839  & 0.21815  & 0.36891  & 0.46106  & 0.86921  \\
& 5 & 0.11211  & 0.14296  & 0.24044  & 0.29471  & 0.30672  & 0.36704  & 0.72157  \\
& 6 & 0.07487  & 0.10097  & 0.21671  & 0.19126  & 0.2234  & 0.44338  & 0.57126  \\
& 7 &   & 0.10343  & 0.16694  & 0.20898  & 0.19727  & 0.48259  & 0.55946  \\
& 8 &   & 0.1056  & 0.22114  & 0.24371  & 0.25512  & 0.63417  & 0.7991  \\
& 9 &   &   & 0.11754  & 0.19612  & 0.29589  & 0.32613  & 0.51654  \\
& 10 &   &   & 0.14666  & 0.17857  & 0.24282  & 0.18316  & 0.56393  \\
& 11 &   &   &   & 0.31039  & 0.22011  & 0.63986  & 0.71099  \\
& 12 &   &   &   & 0.23643  & 0.22018  & 0.31369  & 0.51456  \\
& 13 &   &   &   &   & 0.40641  & 0.50407  & 0.43586  \\
& 14 &   &   &   &   & 0.27931  & 0.50327  & 0.29167  \\
& 15 &   &   &   &   &   & 0.43433  & 0.38009  \\
& 16 &   &   &   &   &   & 0.52718  & 0.41497  \\
& 17 &   &   &   &   &   &   & 0.65534  \\
& 18 &   &   &   &   &   &   & 0.52093  \\

\hline
\end{tabular}
\caption{Errors from different combinations of $(l,k)$ simulated for $ \mathcal{A}_1$.}
\end{table}

\newpage

\begin{table}[!h]
\centering
\begin{tabular}{|c|c|ccccccc|}
\hline
\multicolumn{2}{|c|}{} & \multicolumn{7}{c|}{$l$} \\
\cline{3-9}
\multicolumn{2}{|c|}{} & 13 & 14 & 15 & 16 & 17 & 18 & 19 \\
\hline
\multirow{18}{*}{$k$}
& 1 &   &   &   &   &   & 1.97497  & 2.87813  \\
& 2 &   &   &   & 0.25113  & 0.41856  & 0.53016  & 1.08368  \\
& 3 &   & 0.06392  & 0.16965  & 0.20284  & 0.36099  & 0.47998  & 0.82778  \\
& 4 & 0.05567  & 0.06933  & 0.09849  & 0.1913  & 0.21299  & 0.51363  & 0.61066  \\
& 5 & 0.08798  & 0.06048  & 0.10773  & 0.19639  & 0.27578  & 0.2966  & 0.65836  \\
& 6 & 0.06242  & 0.10747  & 0.10952  & 0.19988  & 0.27281  & 0.32791  & 0.53102  \\
& 7 &   & 0.08838  & 0.12689  & 0.08608  & 0.18994  & 0.23873  & 0.44328  \\
& 8 &   & 0.05909  & 0.16884  & 0.20834  & 0.29658  & 0.44631  & 0.73857  \\
& 9 &   &   & 0.17656  & 0.15423  & 0.28707  & 0.33089  & 0.70613  \\
& 10 &   &   & 0.13615  & 0.21278  & 0.19562  & 0.38462  & 0.58632  \\
& 11 &   &   &   & 0.09943  & 0.15424  & 0.48022  & 0.71004  \\
& 12 &   &   &   & 0.17643  & 0.38302  & 0.32119  & 0.57695  \\
& 13 &   &   &   &   & 0.23213  & 0.22146  & 0.54199  \\
& 14 &   &   &   &   & 0.19692  & 0.47462  & 0.54148  \\
& 15 &   &   &   &   &   & 0.40643  & 0.38577  \\
& 16 &   &   &   &   &   & 0.33587  & 0.66682  \\
& 17 &   &   &   &   &   &   & 0.51917  \\
& 18 &   &   &   &   &   &   & 0.92505  \\

\hline
\end{tabular}
\caption{Errors from different combinations of $(l,k)$ simulated for $n \in \mathcal{A}_2$.}
\end{table}
From the tables it can be seen that the choices for $l$ and $k$ strongly matter. The pairs with $l=13$ gives the smallest error and estimates $\theta$ most accurately under both sets of $n$'s.
When simulations are repeated, any of the three pairs could result in the smallest error. Overall, for the same $k$, the smaller $l$ is, the better the estimation for $\theta$ is. 

Below, Figure \ref{Vasicek} shows an analysis for the Vasicek model where $k$ and $l$ are chosen to be 6 and 13 respectively, with $\theta_0=10$. The number of simulation times $n$ is set as 
\EQQ{\{1000,2000,\ldots,10000,20000,\ldots,100000,200000 , \ldots , 500000\}.} 
For each $n$, we repeat the process by 500 times and calculate the Mean Error of the estimators $\hat{\theta}$'s. 

\subsection{The Heston model}

\begin{figure}[h]
\centering
\includegraphics[scale=0.07]{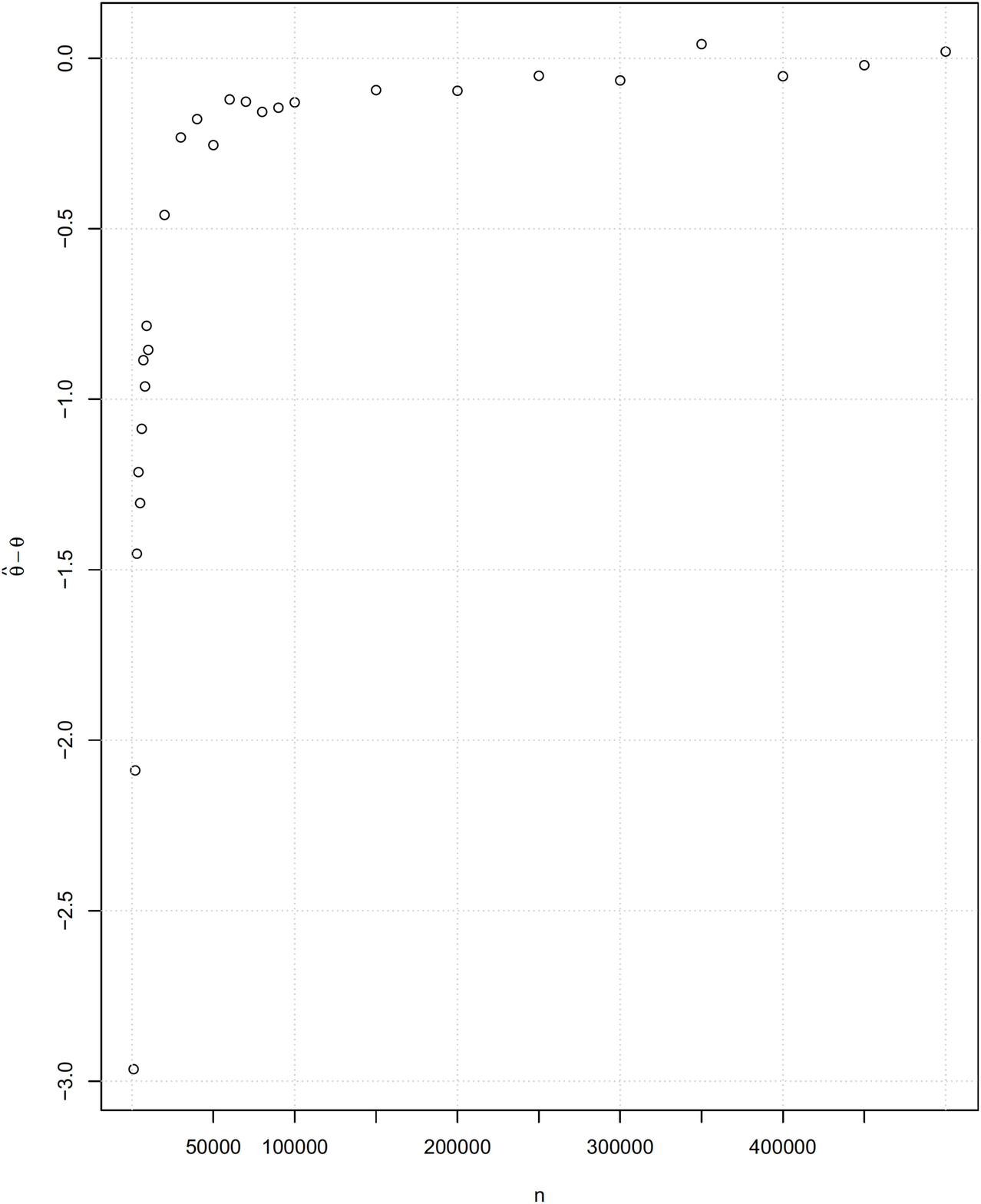}
\caption{Simulation result under a one-dimensional Vasicek model.}
\label{Vasicek}
\end{figure}

\begin{figure}[h]
\centering
\includegraphics[scale=0.07]{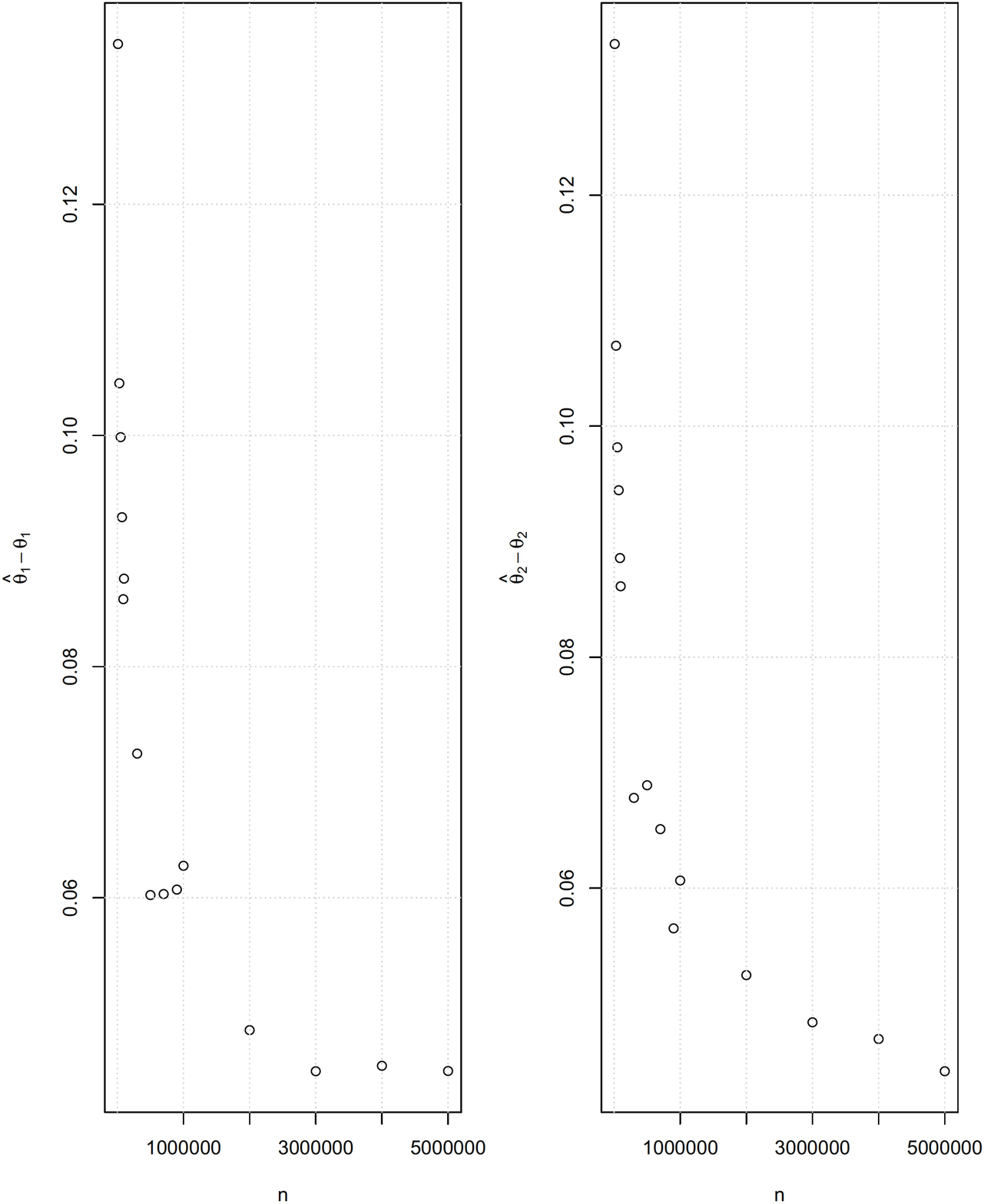}
\caption{Mean Absolute Error under a two-dimensional Heston model.}
\label{HestonMAE}
\end{figure}

Next, the two-dimensional case is simulated. The process $\nu_t$ evolves according to the Heston model, that is, $\nu_t=L(\beta-\nu_t)\td t + \sigma\sqrt{\nu_t}\td W_t$, with parameters $L=1, \beta=1.5$ and $\sigma=0.5$, and the initial value is $\nu_0=1.5$. We want to estimate $\theta^1$ and $\theta^2$ in equation (\ref{eq.Heston}), where $\sqrt{Z^{1,1}}=\sqrt{Z^{2,1}}=\sqrt{Z^{2,2}}=0.4$. $k$ and $l$ remains to be 6 and 13 respectively, and $\theta_0^1=\theta_0^2=5$.

The number of simulation times $n$ is set as 
\EQQ{\{10000,30000,\cdots,90000,100000,
300000,\cdots,900000,1000000, 2000000,\ldots,5000000\}.} 
For each $n$, we repeat the process by 500 times and calculate the Mean-Absolute-Error (MAE) of the estimators $\hat{\theta}$'s. Figure \ref{HestonMAE} shows the result. 


\section{Proofs}\label{se:proofs}

{\colord In this section, we prove the results in Section~\ref{se:2}.
In Section~\ref{sse:approx}, we introduce two functions $\check{H}_{n,\delta}$ and $\tilde{H}_{n,\delta}$ which are approximation of the quasi-log-likelihood $H_n$.
The function $\CHEH$ is introduced to control the event that either $Z_t$ or $\hat{Z}_l$ is close to degenerate for some $t$ or $l$,
and is equal to $H_n$ except on that event.
The function $\TILH$ is obtained by replacing the estimator $\hat{Z}_{l-1}$ in $\CHEH$ with $Z_{t_0^{l-1}}$.
In Section~\ref{consistency-proof-section}, we will show that the difference of $\PT^l \CHEH$ and $\PT^l\TILH$ can be asymptotically {\colora ignored},
and we consequently obtain consistency of $\hat{\theta}_n$.
To show Theorem~\ref{asymp-normal-thm}, we need {\colora an} accurate estimate for the difference of $\PT \CHEH(\theta_0)$ and $\PT \TILH(\theta_0)$, which is given in Proposition~\ref{score-diff-prop} of Section~\ref{sse:AN}. Together with asymptotic estimate Lemma~\ref{tildeH-conv-lemma} of $\PT^l\TILH$, we obtain {\colora then} the desired results.
}

\subsection{Approximation of $H_n$}\label{sse:approx}
For a vector $v$ and a matrix $A$, $[v]_i$ and $[A]_{ij}$ denote $(i,j)$ element of a matrix $A$ and $i$-th element of $v$, respectively.
For {\colorp $q>0$ and} a sequence $p_n$ of positive numbers, let us denote by $\{{\colorp \bar{R}_{n,q}}(p_n)\}_{n\in\mathbb{N}}$ and $\{{\colorp {\underbar{R}_{n,q}}}({\colora p_n})\}_{n\in\mathbb{N}}$
{\colora sequences} of random variables (which may also depend on $l$ and $\theta$) satisfying
\begin{equation}
\sup_{\theta,l}E[|p_n^{-1}{\colorp \bar{R}_{n,q}}(p_n)|^q]^{1/q}<\infty \quad {\rm and} \quad \sup_{\theta,l}E[|p_n^{-1}{\colorp \underbar{R}_{n,q}}(p_n)|^q]^{1/q}\to 0.
\end{equation}
{\colorp Then (A1-$p$) and (A2-$r$) imply 
\EQU{\label{DeltaY-eq} \Delta_l Y=\int_{t_0^l}^{t_0^{l-1}} \psi(X_t,Y_t,V_tV_t^\intercal)dt+\int_{t_0^l}^{t_0^{l-1}}V_tdW_t=\bar{R}_{n,p/r}(\sqrt{c_nh_n}).}
}

Let $Z_t=V_tV_t^\intercal$.
We first {\colord introduce a family of stopping times controlling the degeneracy of $Z_t$ and $\hat{Z}_l$.}
For any $\delta>0$, let
  $$T_{n,\delta}=\inf\{t^{l+1}_0; 0\leq l\leq L_n-1, {\colord \hat{Z}_l\not\in \mathfrak{P}_\delta \ {\rm or} \ Z_t\not\in \mathfrak{P}_\delta} \ {\rm for \ some} \  t\in[0,t_0^{l+1}]\},$$
{\colord where $\inf\emptyset=\infty$.}
Under (A1-$p$), $t_0^l<T_{n,\delta}$ implies that $\det \hat{Z}_{l-1}\geq \delta^{d_Y}$ and $\det Z_t\geq \delta^{d_Y}$ for $t\in [0,t_0^l]$
because $V$ has a continuous path.

{\colord Let} $\tilde{\psi}_l(\theta)=\psi(X_{t^l_0},Y_{t_0^l},Z_{t^{l-1}_0},\theta)$, and let
  $$\check{H}_{n,\delta}(\theta)=-\frac{1}{2}\sum_{l=1}^{L_n-1}{\colord (\Delta_l Y-c_nh_n\hat{\psi}_l(\theta))^\intercal\frac{\hat{Z}_{l-1}^{-1}}{c_nh_n}(\Delta_l Y-c_nh_n\hat{\psi}_l(\theta))}1_{\{t^l_0< T_{n,\delta}\}},$$
and
  $$\tilde{H}_{n,\delta}(\theta)=-\frac{1}{2}\sum_{l=1}^{L_n-1}{\colord (\Delta_l Y-c_nh_n\tilde{\psi}_l(\theta))^\intercal\frac{Z_{t^{l-1}_0}^{-1}}{c_nh_n}(\Delta_l Y-c_nh_n\tilde{\psi}_l(\theta))}1_{\{t^l_0< T_{n,\delta}\}}.$$

When $\delta$ is sufficiently small {\colora and $n$ sufficiently large}, $\check{H}_n$ corresponds to $H_n$
with high probability (see (\ref{H-checkH-diff})).
$\tilde{H}_n$ is an approximation of $\check{H}_n$ which is useful
when we deduce the asymptotic behavior.

{\colora The Burkholder-Davis-Gundy inequality and Jensen's inequality yield
\EQQ{E\bigg[\bigg|\int_{t_{m-1}^l}^{t_m^l}V_tdW_t\bigg|^{2p}\bigg]
\leq C_pE\bigg[\bigg(\int_{t_{m-1}^l}^{t_m^l}|V_t|^2dt\bigg)^p\bigg]
\leq C_ph_n^{p-1}E\bigg[\int_{t_{m-1}^l}^{t_m^l}|V_t|^{2p}dt\bigg]
\leq C_ph_n^p\sup_tE[|V_t|^{2p}],
}
which implies that $\Psi_{1,l,m}:=\int_{t_{m-1}^l}^{t_m^l}V_tdW_t=\bar{R}_{n,2p}(\sqrt{h_n})$ by (A1-$p$).
Similarly, (A1-$p$) and (A2-$r$) yield $\Psi_{2,l,m}:=\int_{t_{m-1}^l}^{t_m^l}\psi(X_t,Y_t,Z_t,\theta_0)dt=\bar{R}_{n,p/r}(h_n)$. Then by} It\^o's formula and the Cauchy-Schwarz inequality, (A1-$p$), and (A2-$r$) yield
\begin{eqnarray}
\hat{Z}_l&=&\frac{1}{c_nh_n}\sum_{m=1}^{c_n}(Y_{t^l_m}-Y_{t^l_{m-1}})(Y_{t^l_m}-Y_{t^l_{m-1}})^\intercal \nonumber \\
&=&{\colora \frac{1}{c_nh_n}\sum_{m=1}^{c_n}\bigg\{\int^{t_m^l}_{t^l_{m-1}}Z_tdt+2\mfa_{l,m}+\Psi_{2,l,m}\Psi_{1,l,m}^\intercal+(\Psi_{1,l,m}+\Psi_{2,l,m})\Psi_{2,l,m}^\intercal\bigg\}} \nonumber \\
&=&\frac{1}{c_nh_n}\sum_{m=1}^{c_n}\bigg\{\int^{t_m^l}_{t^l_{m-1}}Z_tdt+2\mfa_{l,m}+{\colorp \bar{R}_{n,\frac{p}{2r}}}(h_n^{3/2})\bigg\} \nonumber \\
&=&Z_{t_0^l}+\frac{2}{c_nh_n}\sum_{m=1}^{c_n}\mfa_{l,m}+{\colorp \bar{R}_{n,\frac{p}{2r}}}(\sqrt{c_nh_n}) \label{hatZ-eq1} \\
&=&Z_{t_0^l}+{\colorp \bar{R}_{n,\frac{p}{2r}}}(c_n^{-1/2}+\sqrt{c_nh_n}), \label{hatZ-eq2}
\end{eqnarray}
where 
\EQQ{[\mfa_{l,m}]_{ij}=\frac{1}{2}\sum_k\int^{t_m^l}_{t^l_{m-1}}([Y_t-Y_{t^l_{m-1}}]_i[V_t]_{jk}+[Y_t-Y_{t^l_{m-1}}]_j[V_t]_{ik})d[W_t]_k.}
\begin{discuss}
{\colorr $\sup_tE[|Z_t|^p]<\infty$ is required.
$$\frac{1}{ch_n}\sum_{m=1}^{c_n}\int (Z_t-Z_{t_0^l})dt=O\bigg(\frac{1}{c_nh_n}c_nh_n\cdot\sqrt{c_nh_n}\bigg).$$
}
\end{discuss}
Therefore, for any $\delta>0$ and {\colorp $q=p/(2r)$}, we obtain
\begin{equation}\label{hatZ-Z-diff-ineq}
P(\max_l|\hat{Z}_l-Z_{t_0^l}|>\delta)\leq \delta^{-q}\sum_lE[|\hat{Z}_l-Z_{t^l_0}|^q]=O(L_n(c_n^{-1/2}+\sqrt{c_nh_n})^q)\to 0,
\end{equation}
as $n\to \infty$ {\colorp if $q>1/\epsilon$}.
\begin{discuss}
{\colorr It is required that there exists $\epsilon>0$ such that $c_nh_nn^\epsilon\to 0$ and $n^\epsilon c_n^{-1}\to 0$.}
\end{discuss}

Then (A1-$p$) yields 
  \EQU{\label{Tn-lim} {\colord \lim_{\delta\to 0}\liminf_{n\to\infty}P(T_{n,\delta}=+\infty)=1,}}
and therefore, we have
  \EQU{\label{H-checkH-diff} \lim_{\delta\to 0}\liminf_{n\to\infty}P(\check{H}_{n,\delta}(\theta)=H_n(\theta) \ {\rm for \ any} \ \theta)=1. }

{\colord Equation (\ref{H-checkH-diff}) implies that the asymptotic behavior of $H_n$ is essentially the same as more tractable $\CHEH$ for sufficiently small $\delta>0$.
We further show that $\CHEH$ is asymptotically equivalent to $\TILH$ in Lemma~\ref{checkH-tildeH-diff-lemma} of the following section.}

\subsection{Proof of consistency}\label{consistency-proof-section}

\begin{lemma}\label{checkH-tildeH-diff-lemma}
{\colorp Let $p,r\geq 1$ such that (\ref{pr-cond}) is satisfied.} Assume (A1-$p$) and (A2-$r$). Then
\begin{equation}\label{Hn-lim}
(nh_n)^{-1}\sup_\theta|\check{H}_{n,\delta}(\theta)-\check{H}_{n,\delta}(\theta_0)-\tilde{H}_{n,\delta}(\theta)+\tilde{H}_{n,\delta}(\theta_0)|\overset{P}\to 0,
\end{equation}
as $n\to \infty$ for any $\delta>0$.
\end{lemma}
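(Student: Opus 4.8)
The plan is to expand each summand of $\CHEH$ and $\TILH$ as a quadratic form and exploit that the purely $\theta$-independent piece cancels in the double difference. Writing $\hat{\psi}_l(\theta),\tilde{\psi}_l(\theta)$ and the weight matrices $\hat{Z}_{l-1}, Z_{t_0^{l-1}}$, the summand of $\CHEH(\theta)-\CHEH(\theta_0)$ equals $-2\Delta_l Y^\intercal \hat{Z}_{l-1}^{-1}(\hat{\psi}_l(\theta)-\hat{\psi}_l(\theta_0))+c_nh_n(\hat{\psi}_l(\theta)^\intercal \hat{Z}_{l-1}^{-1}\hat{\psi}_l(\theta)-\hat{\psi}_l(\theta_0)^\intercal \hat{Z}_{l-1}^{-1}\hat{\psi}_l(\theta_0))$ times $1_{\{t_0^l<T_{n,\delta}\}}$, and analogously for $\TILH$ with $(\hat{Z}_{l-1},\hat{\psi}_l)$ replaced by $(Z_{t_0^{l-1}},\tilde{\psi}_l)$. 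Crucially the term $\frac{1}{c_nh_n}\Delta_l Y^\intercal (\cdot)^{-1}\Delta_l Y$ carries no $\theta$ and disappears. Thus the quantity inside (\ref{Hn-lim}) splits into a part $I_l$ linear in $\Delta_l Y$ and a part $II_l$ quadratic in $\psi$ carrying a prefactor $c_nh_n$.

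Next I would record the smallness inputs valid on $\{t_0^l<T_{n,\delta}\}$: there both $\hat{Z}_{l-1}$ and $Z_{t_0^{l-1}}$ lie in $\mathfrak{P}_\delta$, so their inverses are bounded by $\delta^{-1}$ and $\hat{Z}_{l-1}^{-1}-Z_{t_0^{l-1}}^{-1}=\hat{Z}_{l-1}^{-1}(Z_{t_0^{l-1}}-\hat{Z}_{l-1})Z_{t_0^{l-1}}^{-1}$ is controlled by $\delta^{-2}|\hat{Z}_{l-1}-Z_{t_0^{l-1}}|$; by (\ref{hatZ-eq2}) this difference is $\bar{R}_{n,p/(2r)}(c_n^{-1/2}+\sqrt{c_nh_n})$. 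Using (A2-$r$) (polynomial bounds and the $z$-Lipschitz estimate for $\PT^l\psi$), both $\hat{\psi}_l(\theta)-\tilde{\psi}_l(\theta)$ and the double difference $(\hat{\psi}_l(\theta)-\hat{\psi}_l(\theta_0))-(\tilde{\psi}_l(\theta)-\tilde{\psi}_l(\theta_0))$ inherit this smallness times a polynomial weight $(1+|X_{t_0^l}|+|Y_{t_0^l}|+|\hat{Z}_{l-1}|)^r$. Combining via Hölder with the moment bounds of (A1-$p$), both the coefficient $B_l:=\hat{Z}_{l-1}^{-1}(\hat{\psi}_l(\theta)-\hat{\psi}_l(\theta_0))-Z_{t_0^{l-1}}^{-1}(\tilde{\psi}_l(\theta)-\tilde{\psi}_l(\theta_0))$ of $I_l$ and the bracket of $II_l$ are, at worst, of order $c_n^{-1/2}+\sqrt{c_nh_n}$ in $L^{p/(4r)}$ uniformly in $\theta$ (the exponent $p/(4r)$ arising in $II_l$, where two factors $\psi$ of degree $r$ meet the small factor, is exactly what feeds (\ref{pr-cond})); the same holds after one $\PT$-differentiation by (A2-$r$) with $l=1,2$.

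For $II_l$ the prefactor $c_nh_n$ already makes a crude bound succeed, since $\frac{1}{nh_n}\sum_l E|II_l|\lesssim \frac{L_n}{nh_n}c_nh_n(c_n^{-1/2}+\sqrt{c_nh_n})\asymp c_n^{-1/2}+\sqrt{c_nh_n}\to 0$. The delicate part is $I_l$, because the naive term-by-term bound gives $\frac{L_n}{nh_n}\sqrt{c_nh_n}(c_n^{-1/2}+\sqrt{c_nh_n})\asymp (c_n\sqrt{h_n})^{-1}+1$, which does not vanish. Here I would split $\Delta_l Y=\int_{t_0^l}^{t_0^{l+1}}\psi\,dt+\int_{t_0^l}^{t_0^{l+1}}V\,dW=:D_l+M_l$; the drift $D_l=\bar{R}_{n,p/r}(c_nh_n)$ is small enough that $\frac{1}{nh_n}\sum_l D_l^\intercal B_l$ obeys the crude bound and tends to $0$. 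The martingale part requires cancellation: since $\hat{Z}_{l-1},Z_{t_0^{l-1}},X_{t_0^l},Y_{t_0^l}$ and the event $\{t_0^l<T_{n,\delta}\}$ are all $\F_{t_0^l}$-measurable while $E[M_l\mid \F_{t_0^l}]=0$, the terms $M_l^\intercal B_l 1_{\{t_0^l<T_{n,\delta}\}}$ form martingale differences, and the Burkholder–Davis–Gundy inequality applied to their sum replaces the factor $L_n$ by $\sqrt{L_n}$ (in $L^{p'}$, not only $L^2$). This turns the bound into $\frac{\sqrt{L_n}}{nh_n}\sqrt{c_nh_n}(c_n^{-1/2}+\sqrt{c_nh_n})\asymp (nh_n)^{-1/2}(c_n^{-1/2}+\sqrt{c_nh_n})\to 0$.

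Finally, to promote all these pointwise-in-$\theta$ estimates to the supremum in (\ref{Hn-lim}), I would apply Sobolev's inequality (\ref{sobolev}) to the $\theta$-function $(nh_n)^{-1}(\CHEH-\CHEH(\theta_0)-\TILH+\TILH(\theta_0))$, reducing $\sup_\theta$ to $L^{p'}(\Theta)$ bounds on the function and its first $\PT$-derivative for some $p'>d$; this is precisely where (\ref{pr-cond}) forces $p/(4r)>d$, and the threshold $p/(4r)>2/\epsilon\vee 4$ guarantees the required number of moments and the decay of the $L_n$-counting factors under (\ref{cn-cond}). I expect the main obstacle to be exactly the $I_l$ term: one must recognize the $\F_{t_0^l}$-measurability of $B_l 1_{\{t_0^l<T_{n,\delta}\}}$ against the zero-mean increment $M_l$ and invoke martingale cancellation, since the deterministic counting bound is off by an $O(1)$ factor and does not by itself give (\ref{Hn-lim}).
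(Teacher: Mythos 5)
Your proposal is correct and follows essentially the same route as the paper's proof: the same cancellation of the $\theta$-free quadratic term, the same splitting into a $c_nh_n$-weighted part quadratic in $\psi$ (handled by a crude bound via (\ref{hatZ-eq2}), the resolvent identity, and the Lipschitz estimate in (A2-$r$)) and a part linear in $\Delta_l Y$ (handled by the drift/martingale split, $\F_{t_0^l}$-measurability of the coefficients and indicator, and Burkholder--Davis--Gundy cancellation turning $L_n$ into $\sqrt{L_n}$), followed by Sobolev's inequality (\ref{sobolev}) on the function and its $\PT$-derivative to obtain uniformity in $\theta$. The only difference is organizational: the paper splits the double difference into four terms $\Lambda_1,\dots,\Lambda_4$ rather than your two groups, which is immaterial.
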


\begin{proof}
By the definitions of $\check{H}_n$ and $\tilde{H}_n$,
we can decompose the difference as 
\begin{eqnarray}
&&\check{H}_{n,\delta}(\theta)-\check{H}_{n,\delta}(\theta_0)-\tilde{H}_{n,\delta}(\theta)+\tilde{H}_{n,\delta}(\theta_0) \nonumber \\
&&\quad =-\frac{c_nh_n}{2}\sum_{l=1}^{L_n-1}\bigg(\tilde{\psi}_l(\theta)^\intercal(\hat{Z}_{l-1}^{-1}-Z_{t^{l-1}_0}^{-1})\tilde{\psi}_l(\theta)
-\tilde{\psi}_l(\theta_0)^\intercal(\hat{Z}_{l-1}^{-1}-Z_{t^{l-1}_0}^{-1})\tilde{\psi}_l(\theta_0)
\bigg)1_{\{t_0^l< T_{n,\delta}\}} \nonumber \\
&&\quad \quad +\sum_{l=1}^{L_n-1}\Delta_lY^\intercal(\hat{Z}_{l-1}^{-1}-Z_{t^{l-1}_0}^{-1})(\tilde{\psi}_l(\theta)-\tilde{\psi}_l(\theta_0))
1_{\{t_0^l< T_{n,\delta}\}} \nonumber \\
&&\quad \quad -\frac{c_nh_n}{2}\sum_{l=1}^{L_n-1}\bigg(\hat{\psi}_l(\theta)^\intercal\hat{Z}_{l-1}^{-1}\hat{\psi}_l(\theta)
-\hat{\psi}_l(\theta_0)^\intercal\hat{Z}_{l-1}^{-1}\hat{\psi}_l(\theta_0) \nonumber \\
&&\quad \quad \quad \quad \quad \quad \quad \quad 
-\tilde{\psi}_l(\theta)^\intercal\hat{Z}_{l-1}^{-1}\tilde{\psi}_l(\theta)+\tilde{\psi}_l(\theta_0)^\intercal\hat{Z}_{l-1}^{-1}\tilde{\psi}_l(\theta_0)\bigg)1_{\{t_0^l< T_{n,\delta}\}} \nonumber \\
&&\quad \quad +\sum_{l=1}^{L_n-1}\Delta_l Y^\intercal\hat{Z}_{l-1}^{-1}(\hat{\psi}_l(\theta)-\hat{\psi}_l(\theta_0)-\tilde{\psi}_l(\theta)+\tilde{\psi}_l(\theta_0))1_{\{t_0^l< T_{n,\delta}\}} \nonumber \\
&&\quad =:\Lambda_1(\theta)+\Lambda_2(\theta)+\Lambda_3(\theta)+\Lambda_4(\theta). \nonumber
\end{eqnarray}

Then it is sufficient to show that
{\colorp $\sup_\theta |\Lambda_j(\theta)|=\underbar{R}_{n,pr'/4}(nh_n)$
for $1\leq j\leq 4$, where $r'=1/r$.}
\begin{discuss}
{\colorr $$E[\sup_\theta |\Lambda_1|^p]\leq C(c_nh_nL_n)^p\frac{1}{L_n}\sum_lE\bigg[\sup_\theta\bigg|\tilde{\psi}_l(\theta)^\intercal(\hat{Z}_{l-1}^{-1}-Z_{t^{l-1}_0}^{-1})\tilde{\psi}_l(\theta)
-\tilde{\psi}_l(\theta_0)^\intercal(\hat{Z}_{l-1}^{-1}-Z_{t^{l-1}_0}^{-1})\tilde{\psi}_l(\theta_0)1_{\{t_0^l< T_{n,\delta}\}}\bigg|^p\bigg].$$}
\end{discuss}
{\colora (A2-$r$) yields 
\EQNN{\sup_\theta |\hat{\psi}_l(\theta)\hat{\psi}_l(\theta)^\intercal-\tilde{\psi}_l(\theta)\tilde{\psi}_l(\theta)^\intercal| 
&=\sup_\theta |\hat{\psi}_l(\theta)(\hat{\psi}_l(\theta)-\tilde{\psi}_l(\theta))^\intercal+(\hat{\psi}_l(\theta)-\tilde{\psi}_l(\theta))\tilde{\psi}_l(\theta)^\intercal| \\
&\leq CC_\delta (1+|X_{t_0^l}|+|Y_{t_0^l}|+|\hat{Z}_{l-1}|+|Z_{t_0^{l-1}}|)^{2r}|\hat{Z}_{l-1}-Z_{t_0^{l-1}}|,}
on $\{t_0^l<T_{n,\delta}\}$ for any $\delta>0$.
Then (A1-$p$), (\ref{hatZ-eq2}), and the Cauchy-Schwartz inequality yield
$\sup_\theta|\hat{\psi}_l(\theta)\hat{\psi}_l(\theta)^\intercal-\tilde{\psi}_l(\theta)\tilde{\psi}_l(\theta)^\intercal|1_{\{t_0^l<T_{n,\delta}\}}= \underbar{R}_{n,pr'/4}(1)$.
We also have
\EQQ{(\hat{Z}_{l-1}^{-1}-Z_{t^{l-1}_0}^{-1})1_{\{t_0^l<T_{n,\delta}\}}=\hat{Z}_{l-1}^{-1}(Z_{t^{l-1}_0}-\hat{Z}_{l-1})Z_{t^{l-1}_0}^{-1}1_{\{t_0^l<T_{n,\delta}\}}=\underbar{R}_{n,pr'/2}(1),}
and hence we obtain}
{\colorp $$\sup_\theta|\Lambda_j(\theta)|=\underbar{R}_{n,pr'/4}(c_nh_nL_n)=\underbar{R}_{n,pr'/4}(nh_n),$$}
for $j\in \{1,3\}$.

Moreover, since 
\begin{eqnarray}
\Delta_l Y&=&\int^{t^{l+1}_0}_{t_0^l}V_sdW_s+\tilde{\psi}_l(\theta_0)(t^{l+1}_0-t^l_0)+{\colorp \bar{R}_{n,pr'/3}}((c_nh_n)^{3/2}) \label{delY-eq2} \\
&=&V_{t^l_0}\Delta_lW+{\colorp \bar{R}_{n,pr'/3}}(c_nh_n), \label{delY-eq}
\end{eqnarray}
{\colorp by (\ref{DeltaY-eq}) and (\ref{hatZ-eq2}),} we have {\colora
\EQQ{\PT^l\Lambda_2
=\sum_{l=1}^{L_n-1}\Delta_lW^\intercal V_{t_0^l}^\intercal(\hat{Z}_{l-1}^{-1}-Z_{t^{l-1}_0}^{-1})\PT^l(\tilde{\psi}_l(\theta)-\tilde{\psi}_l(\theta_0))
1_{\{t_0^l< T_{n,\delta}\}}+\underbar{R}_{n,pr'/4}(nh_n),}
for $l\in \{0,1\}$. The Burkholder-Davis-Gundy inequality and the triangle inequality yield
\EQNN{&E\bigg[\bigg|\sum_{l=1}^{L_n-1}\Delta_lW^\intercal V_{t_0^l}^\intercal(\hat{Z}_{l-1}^{-1}-Z_{t^{l-1}_0}^{-1})\PT^l(\tilde{\psi}_l(\theta)-\tilde{\psi}_l(\theta_0))1_{\{t_0^l< T_{n,\delta}\}}\bigg|^q\bigg] \\
&\quad \leq C_q\bigg(\sum_{l=1}^{L_n-1}E[|\Delta_lW^\intercal V_{t_0^l}^\intercal(\hat{Z}_{l-1}^{-1}-Z_{t^{l-1}_0}^{-1})\PT^l(\tilde{\psi}_l(\theta)-\tilde{\psi}_l(\theta_0))1_{\{t_0^l< T_{n,\delta}\}}|^q]^{2/q}\bigg)^{q/2},
}
for $q\geq 2$. Then we obtain
\EQU{\label{Lambda2-est}  \PT^l\Lambda_2 =\bar{R}_{n,p/(r+1)}(\sqrt{L_nc_nh_n})+\underbar{R}_{n,pr'/4}(nh_n)=\underbar{R}_{n,pr'/4}(nh_n),}
for $l\in \{0,1\}$, and similarly we have}
\begin{eqnarray}\label{Lambda4-est}
\PT^l\Lambda_4&=&\sum_{l=1}^{L_n-1}\Delta_l W^\intercal V_{t_0^l}^\intercal\hat{Z}_{l-1}^{-1}\PT^l(\hat{\psi}_l(\theta)-\hat{\psi}_l(\theta_0)-\tilde{\psi}_l(\theta)+\tilde{\psi}_l(\theta_0))1_{\{t_0^l< T_{n,\delta}\}}
+{\colorp \underbar{R}_{n,pr'/4}}(nh_n) \nonumber \\
&=&{\colorp \bar{R}_{n,p/(r+1)}}(\sqrt{L_nc_nh_n})+{\colorp \underbar{R}_{n,pr'/4}}(nh_n)={\colorp \underbar{R}_{n,pr'/4}}(nh_n),
\end{eqnarray}
for $l\in \{0,1\}$.

Sobolev's inequality (\ref{sobolev}) yields $\sup_\theta|\Lambda_j(\theta)|={\colorp \underbar{R}_{n,pr'/4}}(nh_n)$ for $j\in \{2,4\}$, which completes the proof.

\end{proof}

\noindent
{\bf Proof of Theorem~\ref{consistency-thm}.}

{\colord We first deduce the limit of $(nh_n)^{-1}(H_n(\theta)-H_n(\theta_0))$.}
(\ref{delY-eq2}) yields
\EQN{\label{tildeH-eq} &\tilde{H}_{n,\delta}(\theta)-\tilde{H}_{n,\delta}(\theta_0) \\
&\quad =-\frac{1}{2}\sum_{l=1}^{L_n-1}\bigg(c_nh_n(\tilde{\psi_l}(\theta)^\intercal Z_{t_0^{l-1}}^{-1}\tilde{\psi}_l(\theta) - \tilde{\psi_l}(\theta_0)^\intercal Z_{t_0^{l-1}}^{-1}\tilde{\psi}_l(\theta_0))
-2\Delta_lY^\intercal Z_{t^{l-1}_0}^{-1}(\tilde{\psi}_l(\theta)-\tilde{\psi}_l(\theta_0))\bigg)1_{\{t_0^l<T_{n,\delta}\}} \\
&\quad =-\frac{c_nh_n}{2}\sum_{l=1}^{L_n-1}\bigg(\tilde{\psi_l}(\theta)^\intercal Z_{t_0^{l-1}}^{-1}\tilde{\psi}_l(\theta) - \tilde{\psi_l}(\theta_0)^\intercal Z_{t_0^{l-1}}^{-1}\tilde{\psi}_l(\theta_0)
-2\tilde{\psi}_l(\theta_0)^\intercal Z_{t^{l-1}_0}^{-1}(\tilde{\psi}_l(\theta)-\tilde{\psi}_l(\theta_0))\bigg)1_{\{t_0^l<T_{n,\delta}\}} \\
&\quad \quad +{\colorp \bar{R}_{n,pr'/4}}(nh_n\sqrt{c_nh_n}+\sqrt{nh_n}) \\
&\quad =\mcy_n(\theta) + {\colorp \underbar{R}_{n,pr'/4}}(nh_n),
}
{\colord on $\{t_0^{L_n-1}<T_{n,\delta}\}$,} where 
\EQQ{\mcy_n(\theta)=-\int_0^{nh_n}(\psi(X_t,Y_t,Z_t,\theta)-\psi(X_t,Y_t,Z_t,\theta_0))^\intercal Z_t^{-1}(\psi(X_t,Y_t,Z_t,\theta)-\psi(X_t,Y_t,Z_t,\theta_0))dt.}

Similarly, we have
$\PT\tilde{H}_{n,\delta}(\theta)=\PT\mcy_n(\theta)+ {\colorp \underbar{R}_{n,pr'/4}}(nh_n)$
on {\colord $\{t_0^{L_n-1}<T_{n,\delta}\}$}, and hence Sobolev's inequality yields
\EQU{\label{H-lim} \sup_\theta|\TILH(\theta){\colord -\TILH(\theta_0)}-\mcy_n(\theta)|={\colorp \underbar{R}_{n,pr'/4}}(nh_n).}

Let
  \EQQ{\mcy(\theta)=\DIV{l}{
   -\int(\psi(x,z,\theta)-\psi(x,z,\theta_0))^\intercal z^{-1}(\psi(x,z,\theta)-\psi(x,z,\theta_0))\pi(dxdz) \\
   \hspace{6cm} {\rm if~Point~1~of~{\colora (A3\mathchar`-p)}~is~satisfied} \\
   -\int(\psi(x,y,z,\theta)-\psi(x,y,z,\theta_0))^\intercal z^{-1}(\psi(x,y,z,\theta)-\psi(x,y,z,\theta_0))\pi({\colord dxdydz}) \\
   \hspace{6cm} {\rm if~Point~2~of~{\colora (A3\mathchar`-p)}~is~satisfied}. \\
  }}

Then for any $\epsilon,\eta>0$, (\ref{H-lim}), {\colord (\ref{Tn-lim})} and {\colora (A3-$p$)} yield
  \EQNN{&P(|(nh_n)^{-1}(H_n(\theta)-H_n(\theta_0))-\mcy(\theta)|>\eta) \\
   &\quad \leq P(|(nh_n)^{-1}\mcy_n(\theta)-\mcy(\theta)|>\eta/2)+P(t_0^{L_n-1}\geq T_{n,\delta}) \\
   &\quad \quad +P(|(nh_n)^{-1}(\CHEH(\theta)-\CHEH(\theta_0)-\mcy_n(\theta))|>\eta/2, t_0^{L_n-1}<T_{n,\delta}) \\
   &\quad <\epsilon,
  }
for $\theta\in\Theta$, sufficiently large $n$, and sufficiently small $\delta$.
Then we have
\EQU{\label{Hn-lim-eq} (nh_n)^{-1}(H_n(\theta)-H_n(\theta_0))\overset{P}\to \mcy(\theta),}
as $n\to\infty$ for any $\theta\in \Theta$.

{\colord Next, we show that consistency of $\hat{\theta}_n$ is obtained if (\ref{Hn-lim-eq}) holds uniformly in $\theta$.}

{\colord (A2-$r$), (A3-$p$), and (A4)} imply that $\mathcal{Y}(\theta)$ is continuous on $\theta$ and
\begin{equation*}
\mathcal{Y}(\theta){\colord =0} \quad \Longrightarrow \quad \theta=\theta_0.
\end{equation*}
\begin{discuss}
{\colorr \EQNN{\mcy(\theta)=0 &\Longrightarrow \psi(x,y,z,\theta)=\psi(x,y,z,\theta_0) \quad \pi\mathchar`-{\rm a.s.} \\
  &\Longrightarrow \psi(x,y,z,\theta)=\psi(x,y,z,\theta_0) \ {\rm on} \ \{(x,y,z)|\pi(x,y,z)>0\} \ dxdz\mathchar`-{\rm a.e.} \\
&\Longrightarrow \psi(x,y,z,\theta)=\psi(x,y,z,\theta_0) \ {\rm on} \ \bar{\{(x,y,z)|\pi(x,y,z)>0\}},}
by continuity of $\psi$.
}
\end{discuss}

Then for any $\epsilon,\delta>0$, there exists $\eta>0$ such that
\EQU{\label{mcy-ineq} P\bigg(\inf_{|\theta-\theta_0|\geq \delta}(-\mcy(\theta))<\eta\bigg)<\frac{\epsilon}{2},}
\begin{discuss}
{\colorr $\inf(-\mcy)=0$ implies that an accumulation point $\theta'$ satisfies $\mcy(\theta')=0$. 
So $P(\inf(-\mcy)>0)=1$.}
\end{discuss}
Because $H_n(\hat{\theta}_n)-H_n(\theta_0)\geq 0$ by the definition {\colord of $\hat{\theta}_n$}, 
together with (\ref{mcy-ineq}), we have
$$P(|\hat{\theta}_n-\theta_0|\geq \delta){\colord <} P\bigg(\sup_\theta\bigg|\frac{1}{nh_n}(H_n(\theta)-H_n(\theta_0))-\mcy(\theta)\bigg|>\eta\bigg)+\frac{\epsilon}{2},$$
for any $n$.

Then it is sufficient to show 
\EQU{\label{sup-Hn-lim} \sup_{\theta\in \Theta}\bigg|\frac{1}{nh_n}(H_n(\theta)-H_n(\theta_0))-\mcy(\theta)\bigg|\overset{P}\to 0,}
as $n\to\infty$.
By (\ref{Hn-lim-eq}), it is sufficient to show C-tightness of 
$(nh_n)^{-1}(H_n(\cdot)-H_n(\theta_0))$.

{\colord Finally, we show C-tightness.}
Similarly to the proof of Lemma~\ref{checkH-tildeH-diff-lemma},
we obtain ${\colord \sup_\theta|\PT^l\Lambda_j(\theta)|}={\colorp \underbar{R}_{n,pr'/4}}(nh_n)$ for $1\leq j\leq 4$ and $l\in \{1,2\}$. Together with a similar argument to (\ref{tildeH-eq}),
for {\colorp $q=pr'/4$} and $l\in\{1,2\}$, there exists $N'\in\mbbn$ such that
\EQNN{&\sup_{n\geq N',\theta}E[|(nh_n)^{-1}\PT^l\check{H}_{n,\delta}(\theta)|^q1_{\{t_0^{L_n-1}<T_{n,\delta}\}}] \\
&\quad \leq \sup_{n\geq N',\theta}E[|(nh_n)^{-1}\PT^l\tilde{H}_{n,\delta}(\theta)|^q1_{\{t_0^{L_n-1}<T_{n,\delta}\}}]+1 \\
&\quad \leq \sup_{n\geq N',\theta}E[|(nh_n)^{-1}\PT^l\mcy_n(\theta)|^q1_{\{t_0^{L_n-1}<T_{n,\delta}\}}]+2<\infty.}

Then Sobolev's inequality yields
\EQQ{\limsup_{n\to\infty}E[\sup_\theta |(nh_n)^{-1}\PT\CHEH(\theta)|^q1_{\{t_0^{L_n-1}<T_{n,\delta}\}}]<\infty.}
Together with {\colord (\ref{Tn-lim}) and} (\ref{H-checkH-diff}), for any $\epsilon>0$, there exists $K>0$ and $N''\in\mbbn$ such that
  \EQQ{\sup_{n\geq N''}P(\sup_\theta |(nh_n)^{-1}\PT H_n(\theta)|>K)<\epsilon.}
\begin{discuss}
{\colorr For any $\delta>0$, 
  \EQNN{&P(\sup_\theta|(nh_n)^{-1}\PT H_n(\theta)|>K) \\
   &\quad \leq P(H_n\neq \CHEH \ {\rm for \ some} \ \theta)
   +P(t_0^{L_n-1}\geq T_{n,\delta})
   +P(\sup_\theta|(nh_n)^{-1}\PT \CHEH(\theta)|1_{\{t_0^{L_n-1}< T_{n,\delta}\}}>K).}
}
\end{discuss}

Then C-tightness condition {\colord (Theorem 7.3)} in Billingsley~(1999) yields {\colord the desired result}.

\qed

\subsection{The proof of asymptotic normality}\label{sse:AN}

{\colord We show asymptotic normality of $\hat{\theta}_n$ in this section.
For this purpose, we show a stronger estimate of $\PT \CHEH(\theta_0)-\PT\TILH(\theta_0)$ in Proposition~\ref{score-diff-prop}.
We first prepare a fundamental result which is repeatedly used in the following.}

\begin{lemma}\label{F-sum-lemma}
Let $(F_l)_{l=1}^{L_n-1}$ be random variables satisfying that $F_l$ is $\F_{t_0^l}$-measurable and that $E[F_l|\F_{t_0^{l-1}}]=0$ for $1\leq l\leq L_n-1$. Then
\EQQ{E\bigg[\bigg|\sum_{l=1}^{L_n-1}F_l1_{\{t_0^l<T_{n,\delta}\}}\bigg|^2\bigg]\leq 4\sum_{l=1}^{L_n-1}E[F_l^21_{\{t_0^{l-1}<T_{n,\delta}\}}].}
\end{lemma}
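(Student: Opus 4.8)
The plan is to exploit the discrete-time martingale-difference structure of $(F_l)$ with respect to the filtration $(\mathcal{F}_{t_0^l})_l$, the only delicate point being that the cutoff indicator $1_{\{t_0^l<T_{n,\delta}\}}$ is adapted but \emph{not} predictable. First I would record that $T_{n,\delta}$ is a stopping time taking values in the grid $\{t_0^{l+1}\}\cup\{\infty\}$: since $\hat{Z}_l$ is built from increments of $Y$ on $[t_0^l,t_0^{l+1}]$, it is $\mathcal{F}_{t_0^{l+1}}$-measurable, and together with the continuity of $Z_t$ this makes $\{T_{n,\delta}\le t_0^l\}$ an element of $\mathcal{F}_{t_0^l}$. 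Consequently $1_{\{t_0^l<T_{n,\delta}\}}=1_{\{T_{n,\delta}>t_0^l\}}$ is $\mathcal{F}_{t_0^l}$-measurable, whereas $1_{\{t_0^{l-1}<T_{n,\delta}\}}$ is $\mathcal{F}_{t_0^{l-1}}$-measurable, i.e. predictable. This mismatch by one grid step is exactly what forces the constant $4$ and the shift from $t_0^l$ to $t_0^{l-1}$ in the asserted bound.

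The central idea is to replace the adapted indicator by the predictable one and to control the error. Using that $T_{n,\delta}$ lives on the grid, I would write $1_{\{t_0^l<T_{n,\delta}\}}=1_{\{t_0^{l-1}<T_{n,\delta}\}}-1_{\{T_{n,\delta}=t_0^l\}}$ and split the sum as $S:=\sum_{l=1}^{L_n-1}F_l1_{\{t_0^{l-1}<T_{n,\delta}\}}$ and $R:=\sum_{l=1}^{L_n-1}F_l1_{\{T_{n,\delta}=t_0^l\}}$. For $S$, each weight $1_{\{t_0^{l-1}<T_{n,\delta}\}}$ is $\mathcal{F}_{t_0^{l-1}}$-measurable, so $E[F_l1_{\{t_0^{l-1}<T_{n,\delta}\}}\mid\mathcal{F}_{t_0^{l-1}}]=1_{\{t_0^{l-1}<T_{n,\delta}\}}E[F_l\mid\mathcal{F}_{t_0^{l-1}}]=0$; hence the summands are genuine martingale differences, the cross terms vanish, and $E[S^2]=\sum_{l=1}^{L_n-1}E[F_l^2 1_{\{t_0^{l-1}<T_{n,\delta}\}}]$.

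For the remainder $R$, the events $\{T_{n,\delta}=t_0^l\}$, $1\le l\le L_n-1$, are pairwise disjoint, so $R^2=\sum_{l=1}^{L_n-1}F_l^2 1_{\{T_{n,\delta}=t_0^l\}}$ with all cross terms absent; since $\{T_{n,\delta}=t_0^l\}\subset\{t_0^{l-1}<T_{n,\delta}\}$ this yields $E[R^2]\le\sum_{l=1}^{L_n-1}E[F_l^2 1_{\{t_0^{l-1}<T_{n,\delta}\}}]$. Combining the two estimates via $(S-R)^2\le 2S^2+2R^2$ gives $E[(S-R)^2]\le 2E[S^2]+2E[R^2]\le 4\sum_{l=1}^{L_n-1}E[F_l^2 1_{\{t_0^{l-1}<T_{n,\delta}\}}]$, which is precisely the claim. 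The only real obstacle is the failure of predictability of the stopping indicator; once it is resolved by peeling off the single boundary term $1_{\{T_{n,\delta}=t_0^l\}}$ — harmless because $T_{n,\delta}$ hits at most one grid point per sample path — everything else is the standard $L^2$-orthogonality of martingale differences, and the factor $4$ is exactly the cost of this peeling.
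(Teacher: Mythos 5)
Your proof is correct and follows essentially the same route as the paper's: the same decomposition $1_{\{t_0^l<T_{n,\delta}\}}=1_{\{t_0^{l-1}<T_{n,\delta}\}}-1_{\{t_0^{l-1}<T_{n,\delta}\le t_0^l\}}$ (your $\{T_{n,\delta}=t_0^l\}$ coincides with this event since $T_{n,\delta}$ is grid-valued), martingale-difference orthogonality for the predictable part, disjointness of the boundary events for the remainder, and the elementary bound $(S-R)^2\le 2S^2+2R^2$ producing the constant $4$. Your preliminary verification that $\{T_{n,\delta}>t_0^l\}\in\F_{t_0^l}$ is a worthwhile addition that the paper leaves implicit.
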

\begin{proof}
Because $E[F_l1_{\{t_0^{l-1}<T_{n,\delta}\}}|\F_{t_0^{l-1}}]=0$
and $\sum_{l=1}^{L_n-1}1_{\{t_0^{l-1}<T_{n,\delta}\leq {\colora t_0^l}\}}\leq 1$, 
the Cauchy-Schwarz inequality yields
\EQNN{&E\bigg[\bigg|\sum_{l=1}^{L_n-1}F_l1_{\{t_0^l<T_{n,\delta}\}}\bigg|^2\bigg] \\
&\quad \leq 2E\bigg[\bigg|\sum_{l=1}^{L_n-1}F_l1_{\{t_0^{l-1}<T_{n,\delta}\}}\bigg|^2\bigg]+2E\bigg[\bigg|\sum_{l=1}^{L_n-1}F_l1_{\{t_0^{l-1}<T_{n,\delta}\leq {\colora t_0^l}\}}\bigg|^2\bigg] \\
&\quad \leq 2\sum_{l=1}^{L_n-1}E[F_l^21_{\{t_0^{l-1}<T_{n,\delta}\}}] +2\sum_{l=1}^{L_n-1}E[F_l^21_{\{t_0^{l-1}<T_{n,\delta}\}}]\sum_{l=1}^{L_n-1}E[1_{\{t_0^{l-1}<T_{n,\delta}\leq {\colora t_0^l}\}}] \\
&\quad \leq 4\sum_{l=1}^{L_n-1}E[F_l^21_{\{t_0^{l-1}<T_{n,\delta}\}}].
}
\end{proof}

\begin{proposition}\label{score-diff-prop}
{\colorp Let $p,r\geq 1$ such that (\ref{pr-cond}) is satisfied.}
Assume (A1-$p$), (A2$'$-$r$) and that $n^3h_n^5\to 0$. Then
\begin{equation*}
\frac{1}{\sqrt{nh_n}}\partial_\theta \check{H}_{n,\delta}(\theta_0)-\frac{1}{\sqrt{nh_n}}\partial_\theta \tilde{H}_{n,\delta}(\theta_0)\overset{P}\to 0,
\end{equation*}
as $n\to \infty$ for any $\delta>0$.
\end{proposition}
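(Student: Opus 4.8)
The plan is to differentiate the decomposition of Lemma~\ref{checkH-tildeH-diff-lemma} at $\theta_0$. Since $\check{H}_{n,\delta}(\theta)-\check{H}_{n,\delta}(\theta_0)-\tilde{H}_{n,\delta}(\theta)+\tilde{H}_{n,\delta}(\theta_0)=\sum_{j=1}^4\Lambda_j(\theta)$ and the terms evaluated at $\theta_0$ are constant in $\theta$, we obtain $\partial_\theta\check{H}_{n,\delta}(\theta_0)-\partial_\theta\tilde{H}_{n,\delta}(\theta_0)=\sum_{j=1}^4\partial_\theta\Lambda_j(\theta_0)$, so it suffices to show $\partial_\theta\Lambda_j(\theta_0)=o_P(\sqrt{nh_n})$ for $j=1,\dots,4$. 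Note that the bounds recorded in the consistency proof (e.g. (\ref{Lambda2-est})--(\ref{Lambda4-est})) only give $O_P(\sqrt{L_nc_nh_n})=O_P(\sqrt{nh_n})$, which is not enough; the content of the proposition is to gain the extra vanishing factor, and this is exactly where the sharper choice (\ref{cn-condition}) of $c_n$ (equivalently $n^3h_n^5\to0$) and the finer martingale structure must enter.

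The main inputs are the refined expansion (\ref{hatZ-eq1}), written as $\hat Z_{l-1}-Z_{t_0^{l-1}}=M_{l-1}+\bar R_{n,p/(2r)}(\sqrt{c_nh_n})$ with $M_{l-1}:=\frac{2}{c_nh_n}\sum_{m=1}^{c_n}\mfa_{l-1,m}$, and the representation (\ref{delY-eq}) $\Delta_lY=V_{t_0^l}\Delta_lW+\bar R_{n,pr'/3}(c_nh_n)$. The key structural facts are that $M_{l-1}$ is $\F_{t_0^l}$-measurable with $E[M_{l-1}\,|\,\F_{t_0^{l-1}}]=0$ and $\|M_{l-1}\|_{L^q}=O(c_n^{-1/2})$, that $\Delta_lW$ is a forward increment with $E[\Delta_lW\,|\,\F_{t_0^l}]=0$, and that on $\{t_0^l<T_{n,\delta}\}$ the inverses $\hat Z_{l-1}^{-1},Z_{t_0^{l-1}}^{-1}$ are bounded. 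All the $\bar R(\sqrt{c_nh_n})$ and $\bar R(c_nh_n)$ remainders are handled by crude $L^q$/triangle-inequality bounds; with $\sum_l\sim n/c_n$ terms these contribute $O(nh_n\sqrt{c_nh_n})=O(\sqrt{nh_n}\cdot\sqrt{nc_nh_n^2})=o(\sqrt{nh_n})$ by $nh_n^2c_n\to0$.

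For $\Lambda_2$ and $\Lambda_4$ the argument is merely a sharpening of the consistency proof: after substituting (\ref{delY-eq}) the leading factor is the forward increment $\Delta_lW$, so each summand has conditional mean zero given $\F_{t_0^l}$ and the Burkholder-Davis-Gundy estimate (or the reindexed Lemma~\ref{F-sum-lemma}) applies; the improvement is simply to retain the smallness $\|\hat Z_{l-1}^{-1}-Z_{t_0^{l-1}}^{-1}\|=O(c_n^{-1/2}+\sqrt{c_nh_n})$ from (\ref{hatZ-eq2}) (which was only bounded before), giving $L^2$-norm $O(\sqrt{nh_n/c_n+nc_nh_n^2})=o(\sqrt{nh_n})$ since $c_n\to\infty$ and $c_nh_n\to0$. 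The genuinely harder terms are $\Lambda_1$ and $\Lambda_3$, where after differentiation one is left with $-c_nh_n\sum_l(\text{coeff}_l)(\hat Z_{l-1}^{-1}-Z_{t_0^{l-1}}^{-1})(\cdots)1_{\{t_0^l<T_{n,\delta}\}}$ with \emph{no} forward increment. Here a crude bound fails, as it would require $nh_n/c_n\to0$, which (\ref{cn-condition}) does not provide; one must instead insert $M_{l-1}$ and exploit $E[M_{l-1}\,|\,\F_{t_0^{l-1}}]=0$ through Lemma~\ref{F-sum-lemma}.

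The obstacle, and the technical heart of the proof, is that the coefficient $\partial_\theta\tilde\psi_l(\theta_0)^\intercal Z_{t_0^{l-1}}^{-1}\tilde\psi_l(\theta_0)$ multiplying $M_{l-1}$ is only $\F_{t_0^l}$-measurable (through $X_{t_0^l},Y_{t_0^l}$), not predictable, so Lemma~\ref{F-sum-lemma} does not apply directly. I would resolve this by Taylor-expanding the coefficient in $(x,y,z)$ about the $\F_{t_0^{l-1}}$-measurable base point $(X_{t_0^{l-1}},Y_{t_0^{l-1}},Z_{t_0^{l-1}})$, using the extra smoothness and growth bounds of (A2$'$-$r$), the semimartingale representation $X_t=X_0+\int_0^ta^1_sds+\int_0^ta^2_sdW_s+\int_0^ta^3_sdW'_s$ of (A2$'$-$r$), and the BSDE dynamics of $Y$. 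The zeroth-order (predictable) coefficient times $M_{l-1}$ is a genuine martingale difference, so Lemma~\ref{F-sum-lemma} bounds it by $O(\sqrt{\sum_l(c_nh_n)^2E[M_{l-1}^2]})=O(\sqrt{nh_n^2})=O(\sqrt{h_n}\sqrt{nh_n})=o(\sqrt{nh_n})$. The first-order corrections carry an extra factor $X_{t_0^l}-X_{t_0^{l-1}}$ or $Y_{t_0^l}-Y_{t_0^{l-1}}$ of size $O(\sqrt{c_nh_n})$, and I would split each product $(\text{increment})M_{l-1}$ into its $\F_{t_0^{l-1}}$-conditional mean plus a centered remainder: the centered remainder again fits Lemma~\ref{F-sum-lemma}, while the conditional-mean (It\^o covariation) part is predictable and must be shown to be $O(h_n)$ using the moment bounds of (A1-$p$) and (A2$'$-$r$), so the resulting predictable sum is $O(nh_n^2)=o(\sqrt{nh_n})$ by $nh_n^2\to0$. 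Summing the four contributions completes the proof. The delicate point is precisely this covariation bookkeeping for $\Lambda_1,\Lambda_3$, controlling the interaction of the backward martingale $M_{l-1}$ on $[t_0^{l-1},t_0^l]$ with the forward-measurable coefficients; the calibration of $c_n$ in (\ref{cn-condition}) is exactly what forces every residual order below $\sqrt{nh_n}$.
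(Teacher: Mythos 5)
Your overall strategy is the same as the paper's: differentiate the $\Lambda_j$-decomposition at $\theta_0$, insert the martingale expansion $\hat Z_{l-1}^{-1}-Z_{t_0^{l-1}}^{-1}=-\frac{2}{c_nh_n}\sum_{m}Z_{t_0^{l-1}}^{-1}\mfa_{l-1,m}Z_{t_0^{l-1}}^{-1}+\underbar{R}_{n,pr'/4}((nh_n)^{-1/2})$ (the paper's (\ref{invZ-diff-eq2})), replace the $\F_{t_0^l}$-measurable coefficients built from $\tilde\psi_l$ by the predictable ones built from $\acute{\psi}_l=\psi(X_{t_0^{l-1}},Y_{t_0^{l-1}},Z_{t_0^{l-1}},\cdot)$, and kill the resulting centered sums with Lemma~\ref{F-sum-lemma}. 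Your treatment of $\Lambda_1$ (and, in outline, $\Lambda_3$) is essentially the paper's (\ref{Lambda1-est}) and (\ref{Lambda3-est}), including the order $O(\sqrt{L_nc_n}\,h_n/\sqrt{nh_n})=O(\sqrt{h_n})$ for the predictable-coefficient martingale part, and your diagnosis that a crude bound on such terms would need $nh_n/c_n\to 0$ is exactly right.

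The gap is in your treatment of $\Lambda_2$ and $\Lambda_4$. You substitute (\ref{delY-eq}), $\Delta_lY=V_{t_0^l}\Delta_lW+\bar{R}_{n,pr'/3}(c_nh_n)$, keep only the forward increment as the martingale factor, and dispose of the $\bar{R}(c_nh_n)$ remainder --- which contains the drift $c_nh_n\tilde\psi_l(\theta_0)$ --- by a crude triangle-inequality bound. But in $\Lambda_2$ that remainder is multiplied by $\hat Z_{l-1}^{-1}-Z_{t_0^{l-1}}^{-1}$ (in $\Lambda_4$ by $\PT\hat\psi_l(\theta_0)-\PT\tilde\psi_l(\theta_0)$), whose dominant part is only $O_P(c_n^{-1/2})$; summing $L_n$ such terms and normalizing gives order $L_nc_nh_n\,c_n^{-1/2}/\sqrt{nh_n}=\sqrt{nh_n/c_n}$, and (\ref{cn-condition}) does \emph{not} imply $nh_n/c_n\to 0$ --- it only gives $c_n\gg\sqrt{nh_n}$. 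Concretely, $h_n=n^{-4/5}$, $c_n=n^{0.15}$ satisfies $n^3h_n^5\to 0$ and (\ref{cn-condition}), yet $\sqrt{nh_n/c_n}=n^{0.025}\to\infty$. So the drift components of $\Lambda_2,\Lambda_4$ are terms of precisely the type you yourself identify as fatal for crude bounds, and your claimed $L^2$-estimate $O(\sqrt{nh_n/c_n+nc_nh_n^2})$ for $\partial_\theta\Lambda_2(\theta_0)$ covers only its martingale part. The paper avoids this by using the finer expansion (\ref{delY-eq2}) (drift kept explicit, remainder $\bar{R}((c_nh_n)^{3/2})$, which \emph{is} crude-boundable since $L_n(c_nh_n)^{3/2}/\sqrt{nh_n}=\sqrt{nc_nh_n^2}\to 0$), and then runs the drift term of $\Lambda_2$ through (\ref{invZ-diff-eq2}), predictable coefficients, and Lemma~\ref{F-sum-lemma} --- see the first term in (\ref{Lambda2-est2}) --- with the analogous step via (\ref{psi-diff-eq2}) for $\Lambda_4$. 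The repair uses exactly the machinery you develop for $\Lambda_1$, but as written your proof of the $\Lambda_2,\Lambda_4$ estimates fails.
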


\begin{proof}
{\colora For a positive sequence $(c_n)_{n\in\mbbn}$ and random variables $(U_n)_{n\in\mbbn}$, we denote $U_n=O_P(c_n)$ if $(c_n^{-1}U_n)_{n\in\mbbn}$ is tight, and we denote $U_n=o_P(c_n)$ if $c_n^{-1}U_n\overset{P}\to 0$.}
First, (\ref{hatZ-eq1}) and (\ref{hatZ-eq2}) imply
\begin{eqnarray}\label{psi-diff-eq}
&&\partial_\theta \hat{\psi}_l\hat{\psi}_l^\intercal(\theta_0)-\partial_\theta \tilde{\psi}_l\tilde{\psi}_l^\intercal(\theta_0) \nonumber \\
&&\quad ={\colord \sum_{i,j}}\int^1_0{\colord \partial_{z_{ij}}}(\partial_\theta \psi\psi^\intercal)(X_{t^l_0},Y_{t_0^l},u\hat{Z}_{l-1}+(1-u)Z_{t^{l-1}_0},\theta_0)du\cdot {\colord [\hat{Z}_{l-1}-Z_{t^{l-1}_0}]_{ij}} \nonumber \\
&&\quad =2\sum_{i,j}\frac{\partial_{z_{ij}}(\partial_\theta \acute{\psi}_l \acute{\psi}_l^\intercal)(\theta_0)}{c_nh_n}
\sum_{m=1}^{c_n}[\mfa_{l-1,m}]_{ij}+{\colorp O_P}(c_n^{-1}+\sqrt{c_nh_n}) \nonumber \\
&&\quad ={\colorp O_P}(c_n^{-1/2}+\sqrt{c_nh_n}),
\end{eqnarray}
and similarly
  \EQN{\label{psi-diff-eq2} \PT \hat{\psi}_l(\theta_0)-\partial_\theta \tilde{\psi}_l(\theta_0)
   &=2{\colord \sum_{i,j}}\frac{{\colord \partial_{z_{ij}}}\partial_\theta \acute{\psi}_l(\theta_0)}{c_nh_n}\sum_{m=1}^{c_n}{\colord [\mfa_{l-1,m}]_{ij}}+{\colorp O_P}(c_n^{-1}+\sqrt{c_nh_n}) \\
   &={\colorp O_P}(c_n^{-1/2}+\sqrt{c_nh_n}),
  }
if $t_0^l<T_{n,\delta}$,
{\colord where} $\acute{\psi}_l(\theta)=\psi(X_{t_0^{l-1}},Y_{t_0^{l-1}},Z_{t_0^{l-1}},\theta)$. 
\begin{discuss}
{\colorr We automatically obtain $\{u\hat{Z}_{l-1}+(1-u)Z_{t_0^{l-1}}\}_{u\in [0,1]}\subset \mathfrak{P}$ because the summation of two p.d. matrices is p.d.}
\end{discuss}

Moreover, (\ref{hatZ-eq1}), (\ref{hatZ-eq2}) and (\ref{cn-condition}) yield
\begin{eqnarray}
\hat{Z}_l^{-1}-Z_{t^l_0}^{-1}
&=& Z_{t_0^l}^{-1}(Z_{t_0^l}-\hat{Z}_l)Z_{t_0^l}^{-1}+Z_{t_0^l}^{-1}(Z_{t_0^l}-\hat{Z}_l)\hat{Z}_l^{-1}(Z_{t_0^l}-\hat{Z}_l)Z_{t_0^l}^{-1} \nonumber \\
&=& -\frac{2}{c_nh_n}\sum_{m=1}^{c_n}Z_{t_0^l}^{-1}\mfa_{l,m}Z_{t_0^l}^{-1}+{\colora \underbar{R}_{n,pr'/4}}((nh_n)^{-1/2}) \label{invZ-diff-eq2} \\
&=&{\colora \bar{R}_{n,pr'/4}}(c_n^{-1/2}+(nh_n)^{-1/2}), \label{invZ-diff-eq}
\end{eqnarray}
on $\{t_0^l<T_{n,\delta}\}$.
\begin{discuss}
{\colorr $\bar{R}_n(\sqrt{c_nh_n}+c_n^{-1})=\underbar{R}_n((nh_n)^{-1/2})$.}
\end{discuss}

Then Lemma~\ref{F-sum-lemma} yields
\begin{eqnarray}\label{Lambda1-est}
&&\frac{1}{\sqrt{nh_n}}\partial_\theta \Lambda_1(\theta_0) \nonumber \\
&&\quad=-\frac{c_nh_n}{\sqrt{nh_n}}\sum_{l=1}^{L_n-1}\partial_\theta\tilde{\psi}_l(\theta_0)^\intercal(\hat{Z}_{l-1}^{-1}-Z_{t^{l-1}_0}^{-1})\tilde{\psi}_l(\theta_0)1_{\{t_0^l<T_{n,\delta}\}} \nonumber \\
&&\quad=\frac{2}{\sqrt{nh_n}}\sum_{l=1}^{L_n-1}\sum_{m=1}^{c_n}\partial_\theta\tilde{\psi}_l(\theta_0)^\intercal
Z_{t_0^{l-1}}^{-1}\mfa_{l-1,m}Z_{t_0^{l-1}}^{-1}\tilde{\psi}_l(\theta_0)1_{\{t_0^l<T_{n,\delta}\}}+{\colorp o_P}(1) \nonumber \\
&&\quad=\frac{2}{\sqrt{nh_n}}\sum_{l=1}^{L_n-1}\sum_{m=1}^{c_n}\PT\acute{\psi}_l(\theta_0)^\intercal
Z_{t_0^{l-1}}^{-1}\mfa_{l-1,m}Z_{t_0^{l-1}}^{-1}\acute{\psi}_l(\theta_0)1_{\{t_0^l<T_{n,\delta}\}}+{\colorp o_P}(1) \nonumber \\
&&\quad={\colorp O_P}\bigg(\frac{1}{\sqrt{nh_n}}\sqrt{L_n}\sqrt{c_n}h_n\bigg)+{\colorp o_P}(1)\overset{P}\to 0, 
\end{eqnarray}
\begin{discuss}
{\colorr $\PT^l\tilde{\psi}_l-\PT^l\acute{\psi}_l=O(\sqrt{c_nh_n})$. 
The residual when replacing $\tilde{\psi}$ with $\acute{\psi}$ is
\EQQ{O_P\bigg(\frac{1}{\sqrt{nh_n}}L_nc_n\sqrt{c_nh_n}h_n\bigg)
=O_P(\sqrt{nh_n^2c_n})\overset{P}\to 0.}
\EQQ{E[|\PT \acute{\psi} \cdots |^2]\leq C_pE[(1+|X_{t_0^{l-1}}|+|Y_{t_0^{l-1}}|+|Z_{t_0^{l-1}}|)^{8r+4}]}
Then it is bounded because $\frac{p}{4r}>\frac{2}{\epsilon}\geq 2$ implies $p\geq 8r \geq 4r+2$.

}
\end{discuss}
and (\ref{cn-condition}), (\ref{psi-diff-eq}), and (\ref{invZ-diff-eq2}) yield
\begin{eqnarray}
&&\frac{1}{\sqrt{nh_n}}\partial_\theta \Lambda_3(\theta_0) \nonumber \\
&&\quad = -\frac{c_nh_n}{\sqrt{nh_n}}\sum_{l=1}^{L_n-1}\bigg(\partial_\theta \hat{\psi}_l(\theta_0)^\intercal\hat{Z}_{l-1}^{-1}\hat{\psi}_l(\theta_0)
-\partial_\theta \tilde{\psi}_l(\theta_0)^\intercal\hat{Z}_{l-1}^{-1}\tilde{\psi}_l(\theta)\bigg)1_{\{t_0^l< T_{n,\delta}\}} \nonumber \\
&&\quad = -\frac{1}{\sqrt{nh_n}}\sum_{l=1}^{L_n-1}\sum_{m=1}^{c_n}{\rm tr}(\mathfrak{B}_{l,m}\hat{Z}_{l-1}^{-1})1_{\{t_0^l< T_{n,\delta}\}}+{\colorp o_P}(1) \nonumber \\
&&\quad = -\frac{1}{\sqrt{nh_n}}\sum_{l=1}^{L_n-1}\sum_{m=1}^{c_n}{\rm tr}(\mathfrak{B}_{l,m}Z_{t_0^{l-1}}^{-1})1_{\{t_0^l< T_{n,\delta}\}} \nonumber \\
&&\quad \quad +\frac{2}{c_nh_n\sqrt{nh_n}}\sum_{l=1}^{L_n-1}\sum_{m,m'=1}^{c_n}{\rm tr}(\mathfrak{B}_{l,m}Z_{t_0^{l-1}}^{-1}\mfa_{l-1,m'}Z_{t_0^{l-1}}^{-1})1_{\{t_0^l< T_{n,\delta}\}}+{\colorp o_P}(1), \nonumber
\end{eqnarray}
where $\mathfrak{B}_{l,m}={\colora 2}\sum_{i,j}\partial_{z_{ij}}(\partial_\theta \acute{\psi}_l \acute{\psi}_l^\intercal)(\theta_0)[\mfa_{l-1,m}]_{ij}$.
\begin{discuss}
{\colorr The residual term:
\begin{equation*}
o_P\bigg(\frac{c_nL_n}{\sqrt{nh_n}}h_n(nh_n)^{-1/2}\bigg)=o_P(1).
\end{equation*}
}
\end{discuss}

The first term in the right-hand side of the above equation is equal to
\EQQ{{\colorp O_P}((nh_n)^{-1/2}\sqrt{L_nc_n}h_n)\overset{P}\to 0,}
by Lemma~\ref{F-sum-lemma}.
The second term is equal to
\EQQ{{\colorp O_P}\bigg(\frac{L_nc_nh_n^2}{c_nh_n\sqrt{nh_n}}\bigg)
+{\colorp O_P}\bigg(\frac{\sqrt{L_nc_n^2}h_n^2}{c_nh_n\sqrt{nh_n}}\bigg)\overset{P}\to 0,}
by Lemma~\ref{F-sum-lemma}, (\ref{cn-condition}), and
$E[\sum_{m\neq m'}{\rm tr}(\mathfrak{B}_{l,m}Z_{t_0^{l-1}}^{-1}\mfa_{l-1,m'}Z_{t_0^{l-1}}^{-1})|\F_{t_0^{l-1}}]=0$.
Therefore, we have
\EQU{\label{Lambda3-est} (nh_n)^{-1/2}\PT\Lambda_3(\theta_0)\overset{P}\to 0,}
as $n\to\infty$.
\begin{discuss}
{\colorr 
\EQQ{E[{\rm tr}(\mathfrak{B}_{l,m}Z_{t_0^{l-1}}^{-1})^21_{\{t_0^l< T_{n,\delta}\}}]
	\leq CE[(1+|X|+|Y|+|Z|)^{8r+4}],}
\EQQ{E[{\rm tr}(\mathfrak{B}_{l,m}Z_{t_0^{l-1}}^{-1}\mfa_{l-1,m'}Z_{t_0^{l-1}}^{-1})^2]
	\leq CE[(1+|X|+|Y|+|Z|)^{12r+4}].}
}
\end{discuss}

Moreover, (\ref{invZ-diff-eq}), (\ref{delY-eq2}), {\colorp (\ref{invZ-diff-eq2})} Lemma~\ref{F-sum-lemma}, and {\colora the parallelogram law} yield
\begin{eqnarray}\label{Lambda2-est2}
&&E\bigg[\bigg|\frac{1}{\sqrt{nh_n}}\partial_\theta \Lambda_2(\theta_0)\bigg|^2\bigg] \nonumber \\
&&\quad \leq \frac{C}{nh_n}E\bigg[\bigg|\sum_{l=1}^{L_n-1}(t_0^{l+1}-t_0^l)\acute{\psi}_l(\theta_0)^\intercal
(\hat{Z}_{l-1}^{-1}-Z_{t^{l-1}_0}^{-1})\partial_\theta\tilde{\psi}_l(\theta_0)1_{\{t_0^l<T_{n,\delta}\}}\bigg|^2\bigg] \nonumber \\
&&\quad \quad + \frac{C}{nh_n}\sum_{l=1}^{L_n-1}E\bigg[\bigg(\bigg(\int^{t^{l+1}_0}_{t^l_0}V_tdW_t\bigg)^\intercal(\hat{Z}_{l-1}^{-1}-Z_{t^{l-1}_0}^{-1})\partial_\theta \tilde{\psi}_l(\theta_0)
\bigg)^21_{\{t_0^{l-1}<T_{n,\delta}\}}\bigg] \nonumber \\
&&\quad \quad +O\bigg(\bigg(\frac{L_n}{\sqrt{nh_n}}(c_nh_n)^{3/2}\bigg)^2\bigg) \nonumber \\
&&\quad \leq\frac{C}{nh_n}{\colora \sum_{l=1}^{L_n-1}E\bigg[\bigg|\sum_{m=1}^{c_n}
\acute{\psi}_l(\theta_0)^\intercal Z_{t_0^{l-1}}^{-1}\mfa_{l-1,m}
Z_{t_0^{l-1}}^{-1}\partial_\theta\acute{\psi}_l(\theta_0)
\bigg|^21_{\{t_0^{l-1}<T_{n,\delta}\}}\bigg]} \nonumber \\
&&\quad \quad {\colora + \frac{C}{nh_n}E\bigg[\bigg(c_nh_n\sum_{l=1}^{L_n-1}|\acute{\psi}_l(\theta_0)|\cdot\underbar{R}_{n,pr'/4}((nh_n)^{-1/2})\cdot|\partial_\theta\tilde{\psi}_l(\theta_0)|\bigg)^2\bigg]} \nonumber \\
&&\quad \quad +O\bigg(\frac{L_nc_nh_n}{nh_n}\bigg)\cdot o(c_n^{-1}+(nh_n)^{-1})+O(nh_n^2c_n) \nonumber \\
&&\quad =O\bigg(\frac{1}{nh_n}n\cdot h_n\cdot c_nh_n\bigg)+{\colora +O\bigg(\frac{(L_nc_nh_n)^2}{nh_n}\bigg)\cdot o((nh_n)^{-1})}+o(1)\to 0. \end{eqnarray}
\begin{discuss}
{\colorr We use $(3r+r)\times 2<p$ in the first inequality, $4r+2<p$ in the second inequality, 
and $(4r+1)\times 2 <p$ in the last inequality.}
\end{discuss}

Similarly Lemma~\ref{F-sum-lemma} and (\ref{psi-diff-eq2}) yield
\begin{eqnarray}\label{Lambda4-est2}
&&E\bigg[\bigg|\frac{1}{\sqrt{nh_n}}\partial_\theta \Lambda_4(\theta_0)\bigg|^2\bigg] \nonumber \\
&&\quad \leq \frac{C}{nh_n}E\bigg[\bigg|\sum_{l=1}^{L_n-1}\bigg(\int_{t_0^l}^{t_0^{l+1}}V_tdW_t\bigg)^\intercal \hat{Z}_{l-1}^{-1}
(\partial_\theta\hat{\psi}_l(\theta_0)-\partial_\theta\tilde{\psi}_l(\theta_0))1_{\{t_0^l<T_{n,\delta}\}}\bigg|^2\bigg]+o(1) \nonumber \\
&&\quad \leq O\bigg(\frac{1}{nh_n}L_nc_nh_n(c_n^{-1/2}+\sqrt{c_nh_n})^2\bigg)+o(1)\to 0.
\end{eqnarray}
(\ref{Lambda1-est})--(\ref{Lambda4-est2}) complete the proof.
\end{proof}

Let
  $$\tilde{H}_n(\theta)=-\frac{1}{2}\sum_{l=1}^{L_n-1}{\colord (\Delta_l Y-c_nh_n\tilde{\psi}_l(\theta))^\intercal \frac{Z_{t^{l-1}_0}^{-1}}{c_nh_n}(\Delta_l Y-c_nh_n\tilde{\psi}_l(\theta))}.$$

{\colord The following lemma gives the asymptotic behavior of $\PT^l\tilde{H}_n(\theta)$, which consequently give the asymptotic behavior of $\PT^lH_n(\theta)$.}

\begin{lemma}\label{tildeH-conv-lemma}
{\colorp Let $p,r\geq 1$ such that (\ref{pr-cond}) is satisfied.} Assume (A1-$p$), (A2-$r$), and (A3-$p$) {\colord and that $n^3h_n^5\to 0$ as $n\to\infty$}. Then for any positive numbers $\epsilon_n$ tends to zero,
\begin{equation}\label{tildeH-conv}
\sup_{|\theta-\theta_0|\leq \epsilon_n}\bigg|\frac{1}{nh_n}\partial_\theta^2\tilde{H}_n(\theta)+\Gamma\bigg|\overset{P}\to 0, 
\quad {\rm and} \quad 
\frac{1}{\sqrt{nh_n}}\partial_\theta \tilde{H}_n(\theta_0)\overset{d}\to N(0,\Gamma),
\end{equation}
as $n\to\infty$.
\end{lemma}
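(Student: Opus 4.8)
The plan is to differentiate $\tilde H_n$ explicitly and reduce each of the two assertions to a limit theorem for a sum over the coarse grid $\{t_0^l\}_l$. Writing $u_l(\theta)=\Delta_lY-c_nh_n\tilde\psi_l(\theta)$ and using $\partial_\theta u_l(\theta)=-c_nh_n\partial_\theta\tilde\psi_l(\theta)$, the product rule gives
\begin{equation*}
\partial_\theta\tilde H_n(\theta)=\sum_{l=1}^{L_n-1}\partial_\theta\tilde\psi_l(\theta)^\intercal Z_{t_0^{l-1}}^{-1}u_l(\theta),\qquad
\partial_\theta^2\tilde H_n(\theta)=\sum_{l=1}^{L_n-1}\partial_\theta^2\tilde\psi_l(\theta)^\intercal Z_{t_0^{l-1}}^{-1}u_l(\theta)-c_nh_n\sum_{l=1}^{L_n-1}\partial_\theta\tilde\psi_l(\theta)^\intercal Z_{t_0^{l-1}}^{-1}\partial_\theta\tilde\psi_l(\theta).
\end{equation*}
By (\ref{delY-eq2}), $u_l(\theta_0)=\int_{t_0^l}^{t_0^{l+1}}V_sdW_s+\bar R_{n,pr'/3}((c_nh_n)^{3/2})$ with $r'=1/r$, so in the score the martingale part dominates, while in the normalised Hessian it is the second, purely drift-type sum that produces the limit.

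For the CLT I would set $\xi_{n,l}=(nh_n)^{-1/2}\partial_\theta\tilde\psi_l(\theta_0)^\intercal Z_{t_0^{l-1}}^{-1}\int_{t_0^l}^{t_0^{l+1}}V_sdW_s$. Since the prefactor is $\F_{t_0^l}$-measurable, $(\xi_{n,l})_l$ is a martingale difference array for $(\F_{t_0^{l+1}})_l$, and the vector martingale CLT applies once one checks (i) $\sum_l\E{\xi_{n,l}\xi_{n,l}^\intercal\mid\F_{t_0^l}}\overset{P}\to\Gamma$ and (ii) the conditional Lyapunov bound $\sum_l\E{|\xi_{n,l}|^4\mid\F_{t_0^l}}\overset{P}\to 0$. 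For (i), the conditional second moment of the increment is $\E{\int_{t_0^l}^{t_0^{l+1}}Z_sds\mid\F_{t_0^l}}$; using the $1/2$-H\"older and conditional-drift bounds of (A1-$p$) this equals $c_nh_nZ_{t_0^l}+O((c_nh_n)^2)$, and replacing also $Z_{t_0^{l-1}}$ by $Z_{t_0^l}$ reduces (i) to the Riemann sum $\frac{c_nh_n}{nh_n}\sum_l\partial_\theta\tilde\psi_l(\theta_0)^\intercal Z_{t_0^l}^{-1}\partial_\theta\tilde\psi_l(\theta_0)$; the same discretization estimates identify this with $\frac{1}{nh_n}\int_0^{nh_n}\partial_\theta\psi(X_t,Y_t,Z_t,\theta_0)^\intercal Z_t^{-1}\partial_\theta\psi(X_t,Y_t,Z_t,\theta_0)dt+o_P(1)$, which converges to $\Gamma$ by the ergodicity in (A3-$p$). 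For (ii), Burkholder--Davis--Gundy and the growth bounds of (A1-$p$),(A2-$r$) give $\E{|\xi_{n,l}|^4}=O((nh_n)^{-2}(c_nh_n)^2)$, so the sum is $O(c_n/n)=O((c_nh_n)/(nh_n))\to 0$. Finally the remainder $\bar R_{n,pr'/3}((c_nh_n)^{3/2})$ contributes, in $L^1$, at most $O(L_n(c_nh_n)^{3/2}(nh_n)^{-1/2})=O((nh_n^2c_n)^{1/2})\to 0$ by (\ref{cn-condition}); this yields the second assertion of (\ref{tildeH-conv}).

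For the Hessian I would use $u_l(\theta)=u_l(\theta_0)+c_nh_n(\tilde\psi_l(\theta_0)-\tilde\psi_l(\theta))$. The first sum in $\partial_\theta^2\tilde H_n$ is then $(nh_n)$-negligible: its $\int V_sdW_s$ part is a martingale array of $L^2$-size $(nh_n)^{-1/2}$ by Lemma~\ref{F-sum-lemma}, its $\bar R$ part is $O((c_nh_n)^{1/2})$, and its $\tilde\psi_l(\theta_0)-\tilde\psi_l(\theta)$ part is $O_P(\epsilon_n)$ on $|\theta-\theta_0|\leq\epsilon_n$. The second sum, divided by $nh_n$, is again a frozen-coefficient Riemann sum and converges to $-\Gamma(\theta):=-\int\partial_\theta\psi(\cdot,\theta)^\intercal z^{-1}\partial_\theta\psi(\cdot,\theta)\,\pi$ by the argument of (i), with $\Gamma(\theta_0)=\Gamma$. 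Since $\Gamma(\theta)$ is continuous (dominated convergence, using the polynomial bound of (A2-$r$) and the integrability in (A3-$p$)) and $\epsilon_n\to 0$, we have $\sup_{|\theta-\theta_0|\leq\epsilon_n}|\Gamma(\theta)-\Gamma|\to 0$. The uniformity over the shrinking ball is obtained by applying Sobolev's inequality (\ref{sobolev}) to $\frac{1}{nh_n}\partial_\theta^2\tilde H_n(\theta)+\Gamma(\theta)$, which requires an $L^p$-bound on its $\theta$-derivative, i.e. on $\frac{1}{nh_n}\partial_\theta^3\tilde H_n$; this is exactly why (A2$'$-$r$) supplies $\theta$-derivatives up to order three. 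Combining gives the first assertion of (\ref{tildeH-conv}).

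Two points require care. First, every moment estimate above tacitly uses boundedness of $Z_t^{-1}$, whereas (A1-$p$) only guarantees a.s. finiteness; I would therefore run the argument on the event $\{t_0^{L_n-1}<T_{n,\delta}\}$, on which $Z_t,\hat Z_l\in\mathfrak{P}_\delta$ so that $Z_t^{-1}$ is bounded by a $\delta$-dependent constant and on which $\tilde H_n$ coincides with $\tilde H_{n,\delta}$, and then let $\delta\to 0$ using $\lim_{\delta\to0}\liminf_nP(T_{n,\delta}=\infty)=1$ from (\ref{Tn-lim}). Second, the genuine obstacle is step (i): the two-stage approximation $\E{\int_{t_0^l}^{t_0^{l+1}}Z_sds\mid\F_{t_0^l}}\approx c_nh_nZ_{t_0^l}$ together with the frozen-coefficient Riemann-sum error rests on the precise $1/2$-H\"older moment bounds and the conditional-drift bound of (A1-$p$), and it is here that the balance of the rates $nh_n\to\infty$, $nh_n^2\to 0$ and $nh_n^2c_n\to 0$ is used to ensure these discretization errors are of smaller order than the $(nh_n)^{-1/2}$ fluctuations being tracked. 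Once (i) and (ii) hold, the martingale CLT itself is a routine application of a standard array central limit theorem.
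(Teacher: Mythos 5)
Your proposal follows essentially the same route as the paper's proof: differentiate $\tilde{H}_n$ explicitly, use (\ref{delY-eq2}) to isolate the martingale part $\int_{t_0^l}^{t_0^{l+1}}V_sdW_s$, prove the score CLT by verifying the conditional-variance condition $\sum_l E[\xi_{n,l}\xi_{n,l}^\intercal\mid\F_{t_0^l}]\overset{P}\to\Gamma$ and a conditional fourth-moment (Lyapunov) condition for a martingale difference array (the paper does exactly this, citing Corollary 3.1 of Hall--Heyde and Lemma 9 of Genon-Catalot--Jacod), and identify the Hessian limit through a frozen-coefficient Riemann sum combined with the ergodicity of (A3-$p$). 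Your rate computations agree with the paper's: the drift remainder in the score is $O(\sqrt{nh_n^2c_n})\to 0$ by (\ref{cn-condition}), and the Lyapunov sum is $O(c_n/n)=O(L_n^{-1})\to 0$. Your explicit localization on $\{t_0^{L_n-1}<T_{n,\delta}\}$ followed by $\delta\to 0$ via (\ref{Tn-lim}) is a point where you are more careful than the paper, whose proof of this lemma works with $\tilde{H}_n$ directly and leaves the integrability issues around $Z_t^{-1}$ implicit.

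The one substantive deviation is how uniformity over $\{|\theta-\theta_0|\leq\epsilon_n\}$ is obtained. The paper replaces $\partial_\theta\tilde{\psi}_l(\theta)$ by $\partial_\theta\tilde{\psi}_l(\theta_0)$ in the leading term using the mean-value bound
\begin{equation*}
\sup_{|\theta-\theta_0|\leq\epsilon_n}|\partial_\theta\tilde{\psi}_l(\theta)-\partial_\theta\tilde{\psi}_l(\theta_0)|
\leq \epsilon_n\sup_\theta|\partial_\theta^2\tilde{\psi}_l(\theta)|,
\end{equation*}
which uses only the second $\theta$-derivatives granted by (A2-$r$), so no limit function $\Gamma(\theta)$ and no continuity argument are needed. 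You instead keep $\Gamma(\theta)$, argue its continuity, and invoke Sobolev's inequality at the level of the Hessian, which — as you yourself note — requires third $\theta$-derivatives and hence (A2$'$-$r$), a hypothesis the lemma as stated does not grant. Strictly speaking this means your argument proves the lemma under a stronger assumption than stated. In fairness, the paper shares this tension: its proof also cites Sobolev's inequality, and its treatment of the $\theta$-dependent martingale term $\frac{1}{nh_n}\sum_l\partial_\theta^2\tilde{\psi}_l(\theta)^\intercal Z_{t_0^{l-1}}^{-1}\int_{t_0^l}^{t_0^{l+1}}V_sdW_s$ (bounded pointwise in $\theta$ by Lemma~\ref{F-sum-lemma}, but needing third derivatives for a Sobolev-type sup bound) is equally terse; moreover, the lemma is only applied inside the proof of Theorem~\ref{asymp-normal-thm}, where (A2$'$-$r$) holds, so your version suffices for the paper's purposes. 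If you want to match the stated hypotheses as closely as the paper does, adopt the mean-value reduction to $\theta_0$ for the leading Riemann-sum term rather than the continuity-of-$\Gamma(\theta)$ route.
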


\begin{proof}
Let $\PT^l\tilde{\psi}_{l,0}=\PT^l\tilde{\psi}_l(\theta_0)$ for $l\in \{0,1,2,3\}$.
{\colorp Since (A2-$r$) implies
\EQQ{\sup_{|\theta-\theta_0|\leq \epsilon_n}|\partial_\theta \tilde{\psi}_l(\theta)-\partial_\theta \tilde{\psi}_l(\theta_0)|
\leq \epsilon_n \sup_\theta {\colora |\partial_\theta^2 \tilde{\psi}_l(\theta)|\leq C\epsilon_n(1+|X_{t_0^{l-1}}|+|Y_{t_0^{l-1}}|+|Z_{t_0^{l-1}}|)^r,} }
(\ref{delY-eq2}), {\colora (A3-$p$)} and Sobolev's inequality yield}
\EQNN{&\frac{1}{nh_n}\partial_\theta^2\tilde{H}_n(\theta) \\
&\quad =-\frac{1}{nh_n}\sum_{l=1}^{L_n-1}\bigg\{c_nh_n\partial_\theta\tilde{\psi}_l^\intercal(\theta) Z_{t_0^{l-1}}^{-1}\partial_\theta\tilde{\psi}_l(\theta)
-\partial_\theta^2\tilde{\psi}_l^\intercal(\theta) Z_{t_0^{l-1}}^{-1}(\Delta_l Y - c_nh_n\tilde{\psi}_l(\theta))\bigg\} \\
&\quad =-\frac{c_n}{n}\sum_{l=1}^{L_n-1}\PT\tilde{\psi}_l^\intercal(\theta) Z_{t_0^{l-1}}^{-1}\PT\tilde{\psi}_l(\theta)
+\frac{c_n}{n}\SUML \PT^2\tilde{\psi}_l^\intercal(\theta) Z_{t_0^{l-1}}^{-1}(\tilde{\psi}_{l,0}-\tilde{\psi}_l(\theta)) \\
&\quad\quad  {\colorp +\frac{1}{nh_n}\sum_{l=1}^{L_n-1}\partial_\theta^2\tilde{\psi}_l^\intercal(\theta)Z_{t_0^{l-1}}^{-1}\int_{t_0^l}^{t_0^{l-1}}V_sdW_s}+O_P\bigg(\frac{L_n(c_nh_n)^{3/2}}{nh_n}\bigg) \\
&\quad{\colorp = -\frac{c_n}{n}\sum_{l=1}^{L_n-1}\PT\tilde{\psi}_l^\intercal(\theta_0) Z_{t_0^{l-1}}^{-1}\PT\tilde{\psi}_l(\theta_0)
+O_P\bigg(\frac{L_nc_nh_n}{nh_n}\epsilon_n\bigg) 
+O_P\bigg(\frac{\sqrt{L_n}\sqrt{c_nh_n}}{nh_n}\bigg)+o_P(1)} \\
&\quad \overset{P}\to {\colorp -\Gamma},
}
and
\begin{eqnarray}
\partial_\theta \tilde{H}_n(\theta_0)&=& \sum_{l=1}^{L_n-1}\partial_\theta\tilde{\psi}_{l,0}^\intercal Z_{t_0^{l-1}}^{-1}(\Delta_l Y - c_nh_n\tilde{\psi}_{l,0}) \nonumber \\
&=& \sum_{l=1}^{L_n-1}\partial_\theta\tilde{\psi}_{l,0}^\intercal Z_{t_0^{l-1}}^{-1}\int^{t_0^{l+1}}_{t_0^l}V_sdW_s  +O_P(nh_n\sqrt{c_nh_n}). \nonumber 
\end{eqnarray}
We have $O_P(nh_n\sqrt{c_nh_n})={\colorp O_P(\sqrt{nh_n}\cdot\sqrt{nh_n^2c_n})}=o_P(\sqrt{nh_n})$,
\begin{eqnarray}
&&\sum_{l=1}^{L_n-1}E\bigg[\bigg(\frac{1}{\sqrt{nh_n}}\partial_\theta \tilde{\psi}_{l,0}^\intercal Z_{t_0^{l-1}}^{-1}\int_{t_0^l}^{t_0^{l+1}}V_sdW_s\bigg)^2\bigg|\mathcal{F}_{t_0^l}\bigg] \nonumber \\
&&\quad =\frac{1}{nh_n}\sum_{l=1}^{L_n-1}\partial_\theta \tilde{\psi}_{l,0}^\intercal Z_{t_0^{l-1}}^{-1}\int_{t_0^l}^{t_0^{l+1}}E[Z_s|\F_{t_0^l}]dsZ_{t_0^{l-1}}^{-1}\partial_\theta \tilde{\psi}_{l,0} \overset{P}\to \Gamma, \nonumber
\end{eqnarray}
{\colord and}
\begin{eqnarray}
\sum_{l=1}^{L_n-1}E\bigg[\bigg(\frac{1}{\sqrt{nh_n}}\partial_\theta \tilde{\psi}_{l,0}^\intercal Z_{t_0^{l-1}}^{-1}\int_{t_0^l}^{t_0^{l+1}}V_sdW_s\bigg)^4\bigg|\mathcal{F}_{t_0^l}\bigg]
=O_P\bigg(\frac{L_n(c_nh_n)^2}{n^2h_n^2}\bigg)=O_P(L_n^{-1})\overset{P}\to 0. \nonumber
\end{eqnarray}
Then, Lemma 9 in Genon-Catalot and Jacod~(1993) and
the martingale central limit theorem (Corollary 3.1 and the remark after that in Hall and Heyde~(1980)) imply
\begin{equation*}
\frac{1}{\sqrt{nh_n}}\partial_\theta \tilde{H}_n(\theta_0)\overset{d}\to N(0,\Gamma).
\end{equation*}
\end{proof}

\noindent
{\bf Proof of Theorem~\ref{asymp-normal-thm}.}


Similarly to (\ref{H-checkH-diff}), we obtain
\begin{equation}\label{tildeH-diff}
\lim_{\delta\to 0}\liminf_{n\to \infty}P(\tilde{H}_{n,\delta}(\theta)=\tilde{H}_n(\theta) \ {\rm for \ any} \ \theta)=1.
\end{equation}
Since $\partial_\theta H_n(\hat{\theta}_n)=0$ by definition, 
Taylor's formula yields
\begin{equation*}
-\PT H_n(\theta_0)=\PT H_n(\hat{\theta}_n)-\PT H_n(\theta_0)
=\int^1_0\partial_\theta^2H_n(\theta_u)du(\hat{\theta}_n-\theta_0),
\end{equation*}
if $(\theta_u)_{u\in [0,1]}\subset \Theta$, where $\theta_u=u\hat{\theta}_u-(1-u)\theta_0$ for $0\leq u\leq 1$.

Similarly to Lemma~\ref{checkH-tildeH-diff-lemma}, we have
\EQQ{(nh_n)^{-1}\sup_\theta |\PT \Lambda_j(\theta)|\overset{P}\to 0,}
for $1\leq j\leq 4$. 
Then discussions in Section~\ref{consistency-proof-section}, Lemma~\ref{tildeH-conv-lemma}, and (\ref{tildeH-diff}) yield
\begin{equation*}
\frac{1}{nh_n}\int^1_0\partial_\theta^2\check{H}_{n,\delta}(\theta_u)du
=\frac{1}{nh_n}\int^1_0\bigg\{\partial_\theta^2\tilde{H}_{n,\delta}(\theta_u)+\sum_{j=1}^4\partial_\theta^2\Lambda_j(\theta_u)\bigg\}du
\overset{P}\to \Gamma,
\end{equation*}
on $\{\liminf_{n\to\infty}T_{n,\delta}=\infty\}$ for any $\delta>0$, and together with (\ref{H-checkH-diff}) we obtain
\begin{equation}\label{del2H-conv}
\frac{1}{nh_n}\int^1_0\partial_\theta^2H_n(\theta_u)du\overset{P}\to \Gamma.
\end{equation}

Furthermore, Proposition~\ref{score-diff-prop}, Lemma~\ref{tildeH-conv-lemma} and (\ref{tildeH-diff}) yield
\begin{eqnarray}
\frac{1}{\sqrt{nh_n}}\partial_\theta \check{H}_{n,\delta}(\theta_0)
=\frac{1}{\sqrt{nh_n}}\partial_\theta \tilde{H}_{n,\delta}(\theta_0)+o_P(1)\overset{d}\to N(0,\Gamma),
\end{eqnarray}
on $\{\liminf_{n\to\infty}T_{n,\delta}=\infty\}$. Then together with (\ref{del2H-conv}) and (\ref{H-checkH-diff}), we obtain
\begin{eqnarray}
\sqrt{nh_n}(\hat{\theta}_n-\theta_0)=\Gamma^{-1}\frac{1}{\sqrt{nh_n}}\partial_\theta H_n(\theta_0)+o_P(1)\overset{d}\to N(0,\Gamma^{-1}). \nonumber
\end{eqnarray}
\qed

\noindent 
{\bf Acknowledgements}
Teppei Ogihara was supported by Japan Society for the Promotion of Science KAKENHI Grant Numbers 19K14604 and 21H00997, Japan.

\bibliographystyle{abbrv}

\begin{thebibliography}{99}

\baselineskip 0.37 cm

\bibitem{AF03}
{\sc Adams, R. A. and Fournier, J. J. F.} (2003) Sobolev spaces, Elsevier/Academic Press, Amsterdam.

\bibitem{B99}
{\sc Billingsley, P.} (1999) Convergence of probability measures (Second edition), John Wiley \& Sons, Inc., New York.

\bibitem{BH98}{\sc Briand, P. and Hu, Y.} (1998). Stability of BSDEs with random terminal time and
homogenization of semilinear elliptic PDEs. {\it Journal of Functional Analysis} 155(2), 455–494

\bibitem{BP99}
{\sc Buckdahn, R. and Peng, S.} (1999) Ergodic backward SDE and associated PDE. {\it In Seminar on Stochastic Analysis, Random Fields and Applications}, pp. 73-85. Birkhäuser, Basel.

\bibitem{CL2010}
{\sc Chen, X. and Lin, L.} (2010) Nonparametric estimation for {\colorp FBSDEs} models with applications in finance. {\it Communications in Statistics-Theory and Methods} 39(14), pp. 2492-2514.

\bibitem{CHLT19}
{\sc Chong, W.F., Hu, Y., Liang, G. and Zariphopoulou, T.} (2019) An ergodic BSDE approach to forward entropic risk measures: representation and large-maturity behavior. {\it Finance and Stochastics} 23(1), pp. 239-273.

\bibitem{DHT11}
{\sc Debussche, A., Hu, Y. and Tessitore, G.} (2011) Ergodic BSDEs under weak dissipative assumptions. {\it Stochastic Processes and their Applications} 121(3), pp. 407-426.

\bibitem{FHT09}
{\sc Fuhrman, M., Hu, Y. and Tessitore, G.} (2009) Ergodic BSDEs and optimal ergodic control in Banach spaces. {\it SIAM Journal on Control and Optimization} 48(3),  pp. 1542-1566.

\bibitem{G02}
{\sc Gobet, E.} (2002) LAN property for ergodic diffusions with discrete observations. {\it Annales de l'Institut Henri Poincar\'e. Probabilit\'es et Statistiques} 38(5), pp. 711-737.

\bibitem{GJ93}
{\sc Genon-Catalot, V. and Jacod, J.} (1993) On the estimation of the diffusion coefficient for multi-dimensional diffusion processes,
{\it Annales de l'Institut Henri Poincar\'e. Probabilit\'es et Statistiques} 29(1), pp. 119-151.

\bibitem{GM09}
{\sc Guatteri, G. and Masiero, F.} (2009) Infinite horizon and ergodic optimal quadratic control for an affine equation with stochastic coefficients. {\it SIAM Journal on Control and Optimization} 48(3), pp. 1600-1631.

\bibitem{GT20}
{\sc Guatteri, G. and Tessitore, G.} (2020) Ergodic BSDEs with Multiplicative and Degenerate Noise. {\it SIAM Journal on Control and Optimization} 58(4), pp. 2050-2077.

\bibitem{HH80}
{\sc Hall, P. and Heyde, C. C.} (1980) Martingale limit theory and its application, Academic Press, Inc. [Harcourt Brace Jovanovich, Publishers],
New York-London.


\bibitem{HW18}
{\sc Hu, M. and Wang, F.} (2018) Ergodic BSDEs driven by G-Brownian motion and applications. {\it Stochastics and Dynamics} 18(06), 1850050.


\bibitem{HL19}
{\sc Hu, Y. and Lemonnier, F.} (2019) Ergodic BSDE with unbounded and multiplicative underlying diffusion and application to large time behaviour of viscosity solution of HJB equation. {\it Stochastic Processes and their Applications} 129(10), pp. 4009-4050.

\bibitem{HLWZ15}
{\sc Hu, M., Li, H., Wang, F. and Zheng, G.} (2015) Invariant and ergodic nonlinear expectations for $G$-diffusion processes. {\it Electronic Communications in Probability} 20.

\bibitem{HLT20}
{\sc Hu, Y., Liang, G. and Tang, S.} (2020) Systems of Ergodic BSDEs arising in regime switching forward performance processes. {\it SIAM Journal on Control and Optimization} 58(4), pp. 2503-2534.

\bibitem{K97}
{\sc Kessler, M.} (1997) Estimation of an ergodic diffusion from discrete observations. {\it Scandinavian Journal of Statistics} 24(2), pp. 211-229.


\bibitem{LZ17}
{\sc Liang, G. and Zariphopoulou, T.} (2017) Representation of homothetic forward performance processes in stochastic factor models via ergodic and infinite horizon BSDE. {\it SIAM Journal on Financial Mathematics} 8(1), pp. 344-372.

\bibitem{M15}
{\sc Madec, P.Y.} (2015) Ergodic BSDEs and related PDEs with Neumann boundary conditions under weak dissipative assumptions. {\it Stochastic Processes and their Applications} 125(5), pp. 1821-1860.

\bibitem{OY11}
{\sc Ogihara, T. and Yoshida, N.} (2011) Quasi-likelihood analysis for the stochastic differential equation with jumps. {\it Statistical Inference for Stochastic Processes} 14(3), pp. 189.

\bibitem{R09}
{\sc Richou, A.} (2009) Ergodic BSDEs and related PDEs with Neumann boundary conditions. {\it Stochastic Processes and their Applications} 119(9), pp. 2945-2969.

\bibitem{SY06}
{\sc Shimizu, Y. and Yoshida, N.} (2006) Estimation of parameters for diffusion processes with jumps from discrete observations. {\it Statistical Inference for Stochastic Processes} 9(3), pp. 227-277.

\bibitem{S14}
{\sc Song, Y.} (2014) Terminal-dependent statistical inference for the {\colorp FBSDEs} models. {\it Mathematical Problems in Engineering, 2014}.

\bibitem{SL09}
{\sc Su, Y. and Lin, L.} (2009) Semi-parametric estimation for forward–backward stochastic differential equations. {\it Communications in Statistics-Theory and Methods} 38(11), pp. 1759-1775.

\bibitem{UY12}
{\sc Uchida, M. and Yoshida, N.} (2012) Adaptive estimation of an ergodic diffusion process based on sampled data. {\it Stochastic Processes and their Applications} 122(8), pp. 2885-2924.


\bibitem{Y92}
{\sc Yoshida, N.} (1992) Estimation for diffusion processes from discrete observation. {\it Journal of Multivariate Analysis} 41, pp. 220-242.

\bibitem{Y11}
{\sc Yoshida, N.} (2011) Polynomial type large deviation inequalities and quasi-likelihood analysis for stochastic differential equations. {\it Annals of the Institute of Statistical Mathematics} 63(3), pp. 431-479.

\bibitem{Z13}
{\sc Zhang, Q.} (2013) Terminal-dependent statistical inference for the integral form of {\colorp FBSDE}. {\it Discrete Dynamics in Nature and Society, 2013}.

\bibitem{ZL14}
{\sc Zhang, Q. and Lin, L.} (2014) Terminal-dependent statistical inferences for {\colorp FBSDE}. {\it Stochastic Analysis and Applications} 32(1), pp. 128-151.


\end{thebibliography}

\end{document}